\documentclass[11pt]{article}
\usepackage{amsmath,amssymb,amsfonts,amsthm}
\usepackage{array}
\usepackage{tikz-cd}
\usepackage[margin=1in]{geometry}
\usepackage{bm}
\usepackage{xcolor}
\usepackage[authoryear,round]{natbib}
\usepackage{tikz-3dplot}
\usetikzlibrary{arrows.meta,decorations.markings}
\usepackage{graphicx}
\usepackage[hidelinks]{hyperref}
\usepackage{cleveref}
\newif\ifshowchanges
\showchangesfalse
\ifshowchanges
  \newcommand{\chg}[1]{{\color{orange}#1}}
\else
  \newcommand{\chg}[1]{#1}
\fi

\usepackage{subcaption}

\newcommand{\safeincludegraphics}[2][]{%
  \IfFileExists{#2}{\includegraphics[#1]{#2}}{%
    \fbox{\parbox[c][1.35in][c]{0.88\linewidth}{\centering\small Figure file \texttt{\detokenize{#2}}\\not supplied with this standalone source.}}}}

\newcommand{\SO}[1]{\ensuremath{\mathrm{SO}(#1)}}

\newcommand{\Ad}{\mathrm{Ad}}

\newcommand{\R}{\mathbb{R}}
\newcommand{\Sph}{\mathrm{S}}

\theoremstyle{plain}
\newtheorem{proposition}{Proposition}
\newtheorem{theorem}{Theorem}

\theoremstyle{definition}
\newtheorem{definition}{Definition}
\theoremstyle{remark}
\newtheorem{remark}{Remark}

\title{Rotational Trajectory Synopsis Groups}
\author{
	Gregory S. Chirikjian\footnote{Division of Computing and Mathematical Sciences, Mohamed bin Zayed University of Artificial Intelligence, Abu Dhabi, UAE \texttt{gregory.chirikjian@mbzuai.ac.ae}, and Mechanical Engineering, University of Delaware, Newark, DE 19716 \texttt{gchirik@udel.edu}}
\,\,\,\,\,\,
Taeyoung Lee\footnote{Mechanical and Aerospace Engineering, The George Washington University, Washington DC, 20052 \texttt{tylee@gwu.edu}}}
\date{October 1, 2026}

\begin{document}

\maketitle

\begin{abstract}
 We construct finite-dimensional Lie-group summaries of based rotational paths that retain the exact terminal attitude and selected iterated angular-velocity integrals. These summaries, called rotational trajectory synopses, compose exactly under path concatenation. Building on the classical works of Magnus and Chen, we establish endpoint-retaining body, spatial, and hybrid synopsis groups at every finite order and give a uniform matrix realization for the evaluated Magnus hierarchy. We distinguish this hierarchy from the full Chen signature from order three onward and prove \chg{that the exact endpoint is independent of every finite truncation of the body, spatial, or combined angular-velocity signature: the synopsis maps are surjective at every finite order}. An underactuated satellite example connects the retained coordinates to geometric phase in spacecraft attitude dynamics.
\end{abstract}

\section{Introduction}

Traditionally, rotations and orientations are both described using special orthogonal matrices or unit quaternions. These mathematical objects are useful to capture an orientational state at any fixed time, which can be considered as the result of a time-varying rotational motion up to that time. The actual rotational motion that may have resulted in that state could have been direct, or taken large excursions. For example, a gymnast, ballerina, diver, or figure skater can spin or tumble many times before ending up at a specific orientation. The final rotation matrix does not contain any information about how the final orientation was achieved.
This can be quite important in space applications such as when an astronaut is tethered to a spacecraft, as shown in Figure \ref{tangledastronaut}. This image is only for motivation: rotational history alone does not determine a tether's spatial embedding or knot type, but it is a start.

\begin{figure}[t]
    \centering
    \begin{subfigure}[t]{0.32\linewidth}
        \centering
        \safeincludegraphics[width=\linewidth]{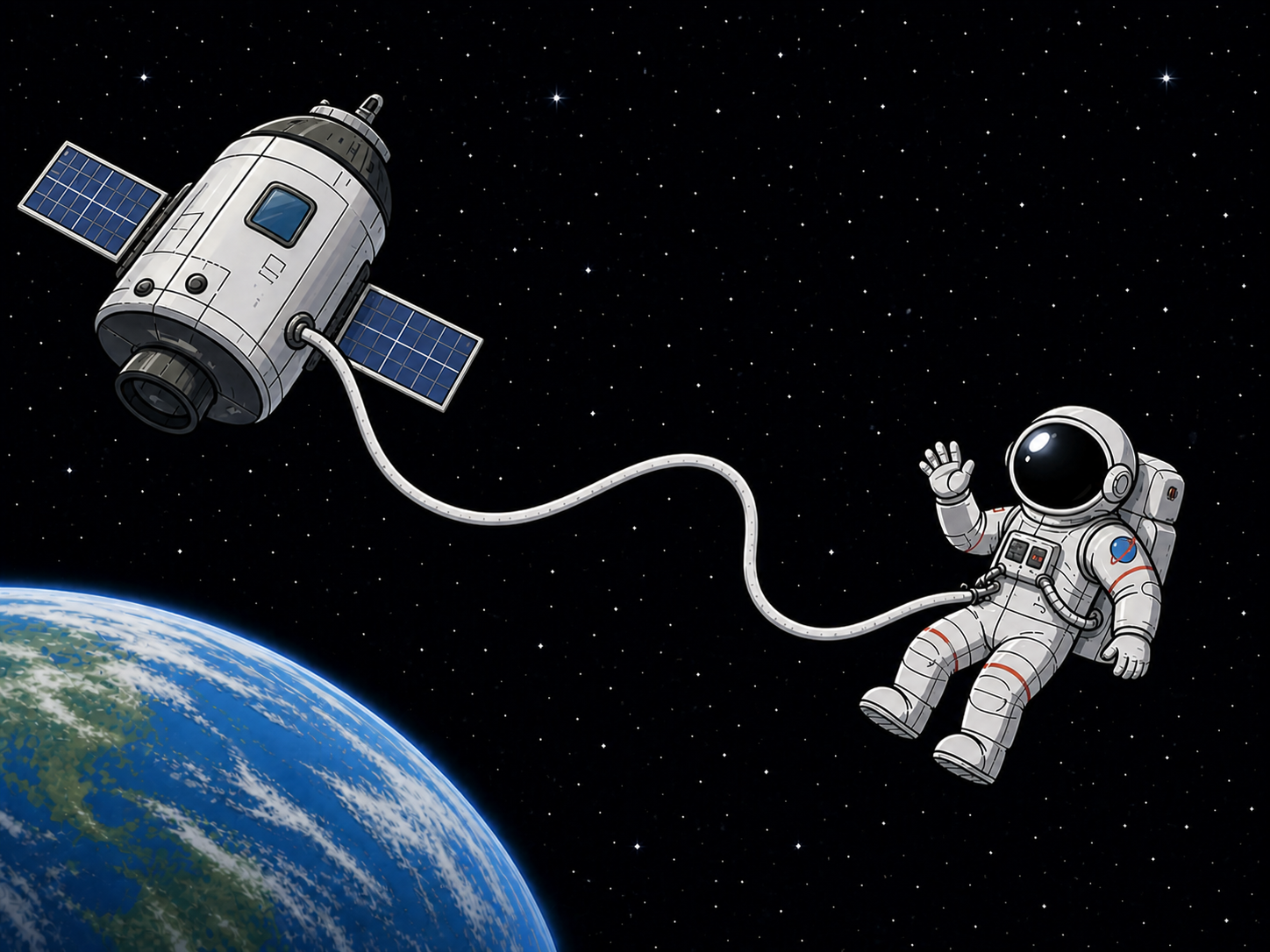}
        \caption{}
    \end{subfigure}
    \hfill
    \begin{subfigure}[t]{0.32\linewidth}
        \centering
        \safeincludegraphics[width=\linewidth]{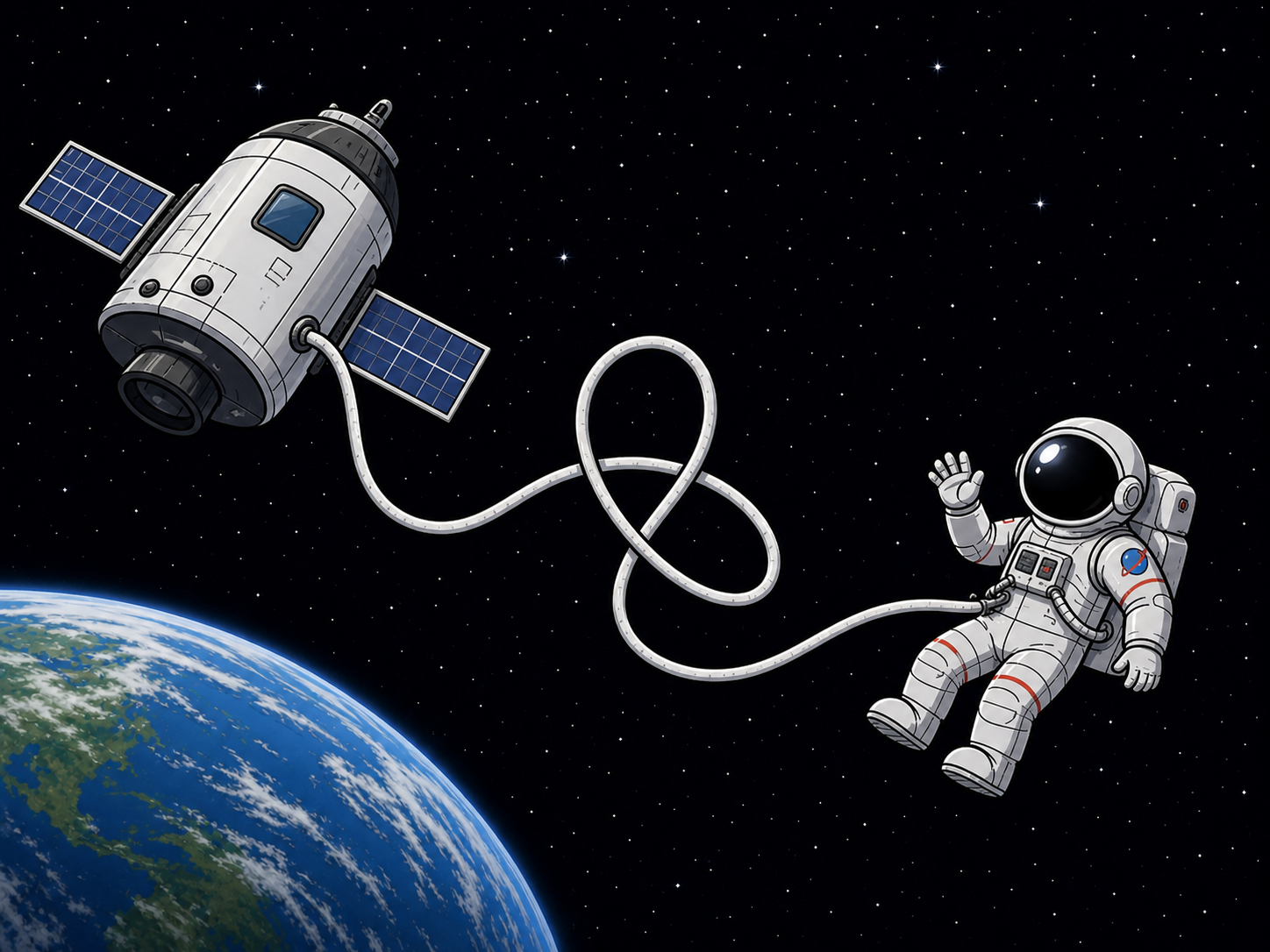}
        \caption{}
    \end{subfigure}
    \hfill
    \begin{subfigure}[t]{0.32\linewidth}
        \centering
        \safeincludegraphics[width=\linewidth]{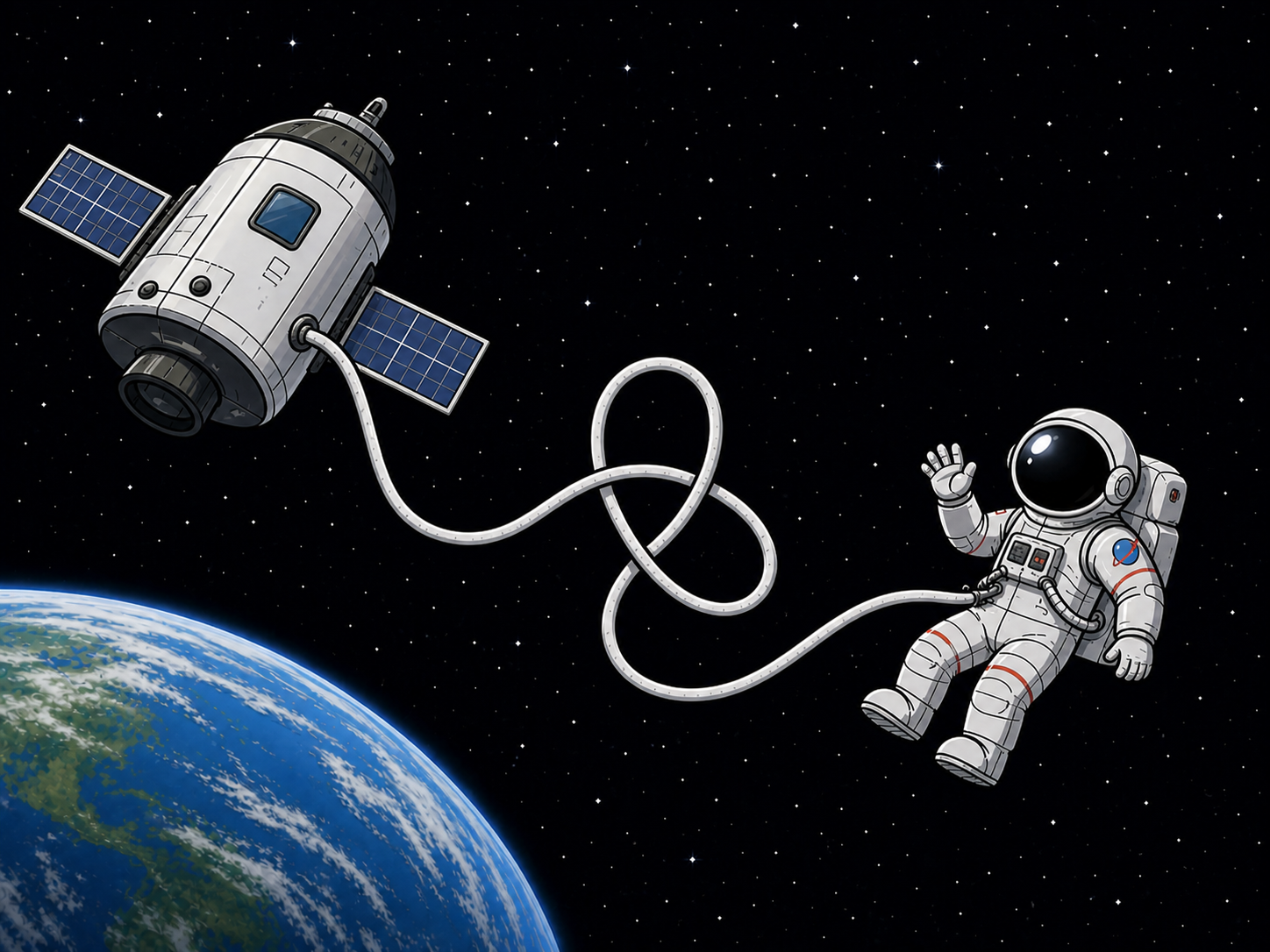}
        \caption{}
    \end{subfigure}
\caption{Illustrative tether configurations with the same terminal orientation but different spatial histories: (a) a tether without tangles; (b) a tangled tether with the loop on the left passing over; (c) a tether with loop on the left passing under that can potentially tighten. The drawings motivate the distinction between terminal orientation and path history; they do not specify a knot classification.}
\label{tangledastronaut}
\end{figure}

The whole time-parameterized rotational trajectory retains the rotational history, but the space of rotational trajectories is infinite dimensional whereas the space of fixed orientations is finite dimensional. In many applications, such as in state estimation and filtering, it is much more convenient to describe probabilities on finite-dimensional spaces than on infinite-dimensional ones. Examples addressing attitude and pose include \citet{Barfoot2024} and \citet{Bonnabel2015,Bonnabel2018}; \citet{Chirikjian2012} and \citet{Chirikjian2024} address arbitrary finite-dimensional unimodular Lie groups.

At the core of this presentation is an intermediate description which records more information about a rotational path than only the final orientation. This is done with quantities such as the integral of angular velocity which give summaries of path properties. Keeping track of these retains the desired feature of finite dimensionality, while being more descriptive of the rotational trajectory resulting in an orientation than simply using the terminal rotation matrix or quaternion.

The contribution is a constructive family of rotational descriptors that stores the exact terminal attitude together with selected accumulated body-fixed and space-fixed motion coordinates. These quantities have explicit composition laws, allowing summaries of consecutive maneuvers to be combined without retaining their sampled histories. The exact rotation supplies information that no fixed finite truncation of the angular-velocity signature determines\chg{: Proposition~\ref{prop:endpoint-adds} proves that, at every order, the synopsis maps onto the body, spatial, and combined groups are surjective, so the endpoint is independent of the truncated signatures}.
 Our contribution is an explicit hierarchy of endpoint-retaining body, spatial, and hybrid rotational synopsis groups, together with concrete matrix realizations and a comparison of the Chen and evaluated Magnus hierarchies.
The first-order space-fixed pair $(R,x_s)$, with $x_s$ denoting the time integral of space-fixed angular velocity,  is itself an element of the special Euclidean (rigid-motion) group $\mathrm{SE}(3)$ (Section~\ref{hybridsec}), and the second-order coordinate $b$ introduced later is the classical \emph{coning} integral of strapdown inertial navigation~\citep{Bortz1971,Savage1998}. 

The conventional wisdom is that rotations in $n$-dimensional Euclidean space can be described by special orthogonal matrices, the set of which is
$$ \SO{n} := \{R \in \mathbb{R}^{n\times n} \,|\, RR^T = \mathbb{I} \,\,\,{\rm and}\,\, \det R = +1\} \,.$$
These are compact connected Lie groups under the operation of multiplication of the matrices that constitute their respective elements.

It is well known that the matrix exponential of a skew-symmetric matrix is a rotation matrix, and in the three-dimensional case the dual vector associated with such matrices is used extensively.
The space of $3\times 3$ real skew-symmetric matrices is denoted by $\mathfrak{so}(3)$, and we write
$\exp: \mathfrak{so}(3) \rightarrow \SO3$.
Given a vector ${x} = [x_1,x_2,x_3]^T \in \mathbb{R}^3$, it is always possible to construct a skew-symmetric matrix
$$x^{\wedge} := \left(\begin{array}{ccc}
0 & -x_3 & x_2 \\
x_3 & 0 & -x_1 \\
-x_2 & x_1 & 0 \end{array}\right) $$
such that
$$x^{\wedge} {y} = {x} \times {y}$$
for any ${y} \in \mathbb{R}^3$ where
$\times$ is the cross product. That is, there is a bijective map
$$\wedge: \mathbb{R}^3 \rightarrow \mathfrak{so}(3)$$
and the inverse is
$$\vee: \mathfrak{so}(3) \rightarrow \mathbb{R}^3$$
such that
$$x = (x^{\wedge})^{\vee} \,.$$
This is all well known in the robotics and geometric controls and state estimation literature.
Perhaps less well known is that given any $x\neq 0$ if ${u} := {x}/\|{x}\|$ then for any
$k \in \mathbb{Z}$
$$ \exp(x^{\wedge} + 2\pi k {u}^{\wedge}) \,=\, \exp x^{\wedge} \,.$$
 We use a minimum-norm real skew-symmetric logarithm, whose rotation vector lies in a ball of radius $\pi$:
$$ y^{\wedge} := \log(\exp {x}^{\wedge})  \,\,\,{\rm where}\,\,\, \|y\|\leq \pi\,.$$
 For rotations of angle less than $\pi$, this logarithm is unique and its rotation vector has norm less than $\pi$. At angle $\pi$, the two minimum-norm rotation vectors differ by sign; a selection must be specified if the logarithm is used there. The unoriented rotation axis is determined, but no global continuous choice of its orientation exists.

\subsection*{Related work and scope}
A path signature is the sequence of its iterated integrals; its tensor logarithm is the log signature (Section~\ref{chenrev}). The group-like, or shuffle, relations are the algebraic constraints satisfied by these integrals. Informally, tree-like equivalence identifies paths differing by excursions that can be collapsed through a tree; the cited uniqueness theorem gives the precise definition.
Magnus expansions reorganize time-ordered exponentials into Lie polynomials, with convergence requiring separate hypotheses~\citep{Magnus1954}. Chen's iterated integrals and their group-like/shuffle structure underlie path signatures~\citep{Chen1957}; bounded-variation paths are determined by their full signatures only up to tree-like equivalence~\citep{HamblyLyons2010}. Signatures have also been developed directly for Lie-group-valued paths, including rotation groups~\citep{LeeGhrist2020}. (See also \citealp{Blanes2009,Iserles2000,Chen1977,Reutenauer1993,Fliess1981,Sussmann1986,Beauchard2023,MuntheKaas1998} for further background on Magnus/Chen theory and its control-theoretic antecedents.) 

A number of recent works address similar issues as in this paper. Celledoni, Lystad, and Tapia use signatures and log signatures of Lie-group paths for motion identification, including data in products of $\SO3$~\citep{Celledoni2019}. Lou, Li, and Ni study finite-dimensional Lie-group developments of paths and their multiplicativity~\citep{Lou2024}. Lee and Ghrist explicitly establish Chen's identity and the homomorphism structure for signatures on Lie groups~\citep{LeeGhrist2020}. Accordingly, concatenation-preserving group-valued path summaries have substantial precedent. In the present work, the descriptor explicitly retains the physical endpoint together with  a finite truncation of the angular-velocity signature. It combines body and spatial integrals when desired, and constructs associated concrete matrix Lie groups. That is, the formulation presented here is a specialization and extension of the established machinery, resulting in a concrete hierarchy of finite-dimensional matrix Lie groups appropriate for engineers to do computations.

Three further bodies of work are closely related: (i) In strapdown inertial navigation the second-order Magnus coordinate $b=\tfrac12\int_{0<t_1<t_2<T}\omega_b(t_1)\times\omega_b(t_2)\,dt_1dt_2$ is the classical coning term of the Bortz rotation-vector equation~\citep{Bortz1971,Savage1998}, so the geometric-phase reading of $b$ in Section~\ref{sec:geophase} has a long history there. (ii)  On-manifold inertial measurement unit (IMU) preintegration~\citep{Forster2017} also stores composable finite-dimensional increments together with an exact rotation increment; the first-order space-fixed group $\SO3\ltimes\R^3$ used here has a similar semidirect structure. (iii) Nilpotent approximation and motion planning for driftless systems~\citep{LafferriereSussmann1991} rely on truncated free nilpotent groups closely related to the $G_N$ of Section~\ref{sec:allorders}.  Proposition~\ref{prop:endpoint-adds} uses Lie-algebra generation and controllability to establish surjectivity of the body, spatial, and hybrid synopsis maps at every finite order. In particular, the exact endpoint is not determined even by the combined finite body and spatial signatures. 

\begin{table}[t]
\centering
\small
\begin{tabular}{>{$}l<{$} p{0.68\linewidth}}
\hline
\text{Symbol} & Meaning\\ \hline
\mathcal P_e(G),\ \gamma & Duration-tagged based path monoid; one path\\
\Sigma,\ \Pi,\ \sigma & Synopsis group, synopsis map, synopsis element\\
R(t) & Physical rotation in $\SO3$\\
\omega_b,\omega_s;\ \Omega_b,\Omega_s & Angular-velocity vectors; corresponding $\mathfrak{so}(3)$ matrices\\
x_b,x_s & First integrated angular-velocity vectors\\
\Xi_{n,b},\Xi_{n,s} & Homogeneous evaluated Magnus terms\\
S^{(N)} & Chen signature truncated at degree $N$\\
A;\ B=b^\wedge & Chronological second tensor; step-2 log-signature coordinate\\
Q_i,a_i & Test-frame rotation and its retained moment\\
\Gamma;\ \mathcal{F} & Reduced direction on $S^2$; oriented spanning surface\\
\kappa & Sign of the geometric-phase steering loop\\
{N} & Truncation order (\chg{Sections~\ref{chenrev}--\ref{sec:allorders}}) \\
P &  Number of modal coefficients (\chg{Section~\ref{sec:modal}}) \\
{G_N(V),\ \mathfrak m_N,\ M_N} & Free step-$N$ signature group; evaluated Magnus algebra; evaluated Magnus group \\
{\odot,\ \circ_N} & Truncated tensor product;  Baker--Campbell--Hausdorff (BCH) product on $M_N$ \\
{\varepsilon} & Formal degree label with $\varepsilon^{N+1} :=0$ \\
\chg{\iota} & \chg{Zero-duration identity path (Sections~\ref{planarsec}--\ref{pathsec})} \\
\chg{\delta} & \chg{Smallness parameter of the local loop expansion (Section~\ref{sec:magnus-geophase})} \\
{x_b,\,b_b,\,c_b,\,d_b} & $\Xi_{k,b}^\vee$ for $k=1,\dots,4$ (subscript $b$ = body; not new vectors) \\ \hline
\end{tabular}
\caption{Notation used throughout the manuscript. Lowercase angular velocities are vectors; uppercase angular velocities are skew matrices.}
\label{tab:notation}
\end{table}

The remainder of this paper is organized as follows.
Section \ref{planarsec} begins with the planar case, where the basic idea of  recording net signed planar rotation can be stated without the complications of noncommutativity.
Section \ref{pathsec} then formulates rotational trajectories as based paths with a natural concatenation operation, emphasizing the distinction between endpoint orientation and path history.
Section \ref{hybridsec} introduces group extensions based on integrated body-fixed and space-fixed angular velocities.
Sections \ref{magnussec} and \ref{chenrev} then develop higher-order synopsis variables using the Magnus expansion and Chen iterated integrals, including their matrix group realizations and their body-fixed and space-fixed versions.
Section~\ref{sec:allorders} proves the all-orders group constructions and explains the third-order Chen--Magnus distinction.
The following sections examine when these quantities can reconstruct trajectory information, illustrate the role of geometric phase in an underactuated satellite example, and conclude by summarizing the resulting hierarchy of rotational trajectory synopsis groups.

\section{Planar Rotational Synopsis Groups} \label{planarsec}

The first step in developing a synopsis of rotational paths is establishing in what space such paths evolve.
 In the planar case, $\SO2$ consists of matrices of the form
$$ R(\theta) = \left(\begin{array}{cc}
\cos \theta & - \sin\theta \\
\sin\theta & \cos \theta \end{array}\right) \,.$$

This might appear to be the end of the story, but the periodicity of the trigonometric functions destroys any path history involving multiple revolutions. The universal covering group of $SO(2)$ can be identified with the real line with $x \in \mathbb{R}$ with operation of addition, and this keeps that information. Describing ``total orientation'' as $x \in \mathbb{R}$ has more information than $\theta \in [0,2\pi)$, and is backward compatible with it via modular arithmetic. In group-theoretic language, the integers (and scaled versions thereof) form a normal subgroup of the real numbers, and the quotient group of interest is
$$ \SO2 \,\cong\, \frac{\mathbb{R}}{2\pi \mathbb{Z}} \,. $$

Writing $x = \theta + 2\pi k$ means that the 
integer counter $k \in \mathbb{Z}$ has a topological interpretation.
	The fundamental group $\pi_1(M)$ records homotopy classes of loops in a manifold $M$~\citep{hatcher2002algebraic}, and for the planar rotation group
$$
\pi_1(\SO2) \cong \mathbb{Z},
$$
so loops are classified by an integer winding number. Equivalently, the universal cover of $\SO2$ is
$$
\mathbb{R} \rightarrow \SO2, \qquad x \mapsto R(x),
$$
and the integer $k$ records the covering sheet of the lifted endpoint of a planar rotational trajectory.

The group $(\mathbb{R},+)$
measures more than the terminal rotation matrix in a planar rotational trajectory and
can be used at one level as a synopsis of a continuous rotational motion.
In this covering group the composition of two based trajectories (i.e., $x_1(t)$ and $x_2(t)$ with $x_1(0)=x_2(0)=0$) is:
\[
(x_1 \star x_2)(t) \,:=\,
\begin{cases}
x_1(t), & 0 \le t \le T_1, \\[6pt]
x_1(T_1) + x_2(t - T_1), & T_1 \le t \le T_1 + T_2 .
\end{cases}
\]
The operation $\star$ is not commutative:
$$ (x_1 \star x_2)(t)  \neq (x_2 \star x_1)(t) $$
in general, even though \SO2 and $(\mathbb{R},+)$ are commutative and both concatenations have the same endpoints. Their time histories, and even their images, can differ. A change of time parameter need not make them equivalent.

Moreover, the set of all such based trajectories, together with the concatenation operation $\star$, which is associative
$$ ((x_1 \star x_2) \star x_3)(t) = (x_1 \star (x_2 \star x_3))(t) \,,$$
is not a group. Rather, it is a monoid if zero-duration paths are admitted, and otherwise only a semigroup, since strictly speaking a trajectory does not have an inverse that results in the identity path. When zero duration is admitted, the constant path $\chg{\iota}(t) := e$ on $[0,0]$ is the identity. Concatenating a nonconstant path with a reversal still produces a positive-duration loop, not that identity.

The simplest synopsis of a based rotational trajectory over the time interval $[0,T]$ is then simply
\begin{equation}
x(T) = \int_{0}^{T} \dot{x}(t) \,dt = \theta(T) + 2\pi k(T)
\label{intxeq}
\end{equation}
for some $k \in \mathbb{Z}$.

This simple equation can be interpreted in a very meaningful way.
Away from branch-cut crossings, $\dot{x}=\dot{\theta}$. The wrapped angle $\theta$ jumps at those crossings, whereas its continuous lift $x$ is obtained by integrating the physical angular velocity. This integral keeps track of  the net accumulated signed planar rotation.
A forward rotation followed by an equal backward rotation therefore gives zero net rotation, even though the path has positive length.
Consequently, $x(T)$ gives the desired synopsis in the planar case, whereas $\theta(T)$ does not. Equivalently, this could be written as $(\theta(T), k(T))$, or as $(\theta(T), x(T))$. Though the last pair is redundant in the planar case, it
has several useful extensions in the three-dimensional case as will be seen in subsequent sections.

It is precisely a 3D analogue of this construction that is sought below. Let $\mathcal{P}_e(G)$ be the monoid of duration-tagged, piecewise-$C^1$ based paths $(T,\gamma)$, with $T\geq0$, $\gamma:[0,T]\to G$, $\gamma(0)=e$,  raw concatenation $\star$ as defined in Section~\ref{sec:path-composition}, and the unique zero-duration path $\chg{\iota}$ as identity.

\begin{definition}[Finite-dimensional group synopsis]
A synopsis is a pair $(\Sigma,\Pi)$ consisting of a finite-dimensional Lie group $\Sigma$ and a monoid homomorphism $\Pi:\mathcal{P}_e(G)\to \Sigma$:
\[
\Pi(\chg{\iota})=e_{\Sigma},\qquad
\Pi(\gamma_1\star\gamma_2)=\Pi(\gamma_1)\circ\Pi(\gamma_2).
\]
It is endpoint retaining if a homomorphism $p:\Sigma\to G$ satisfies $p(\Pi(T,\gamma))=\gamma(T)$. Surjectivity of $\Pi$ is not required. Write $\sigma=\Pi(T,\gamma)\in\Sigma$ for the synopsis element. The displayed square is the defining compatibility condition.
\end{definition}

\[
\begin{tikzcd}[column sep=large, row sep=large]
\mathcal{P}_e(G) \times \mathcal{P}_e(G)
\arrow[r, "\star"]
\arrow[d, "{\Pi \times \Pi}"']
&
\mathcal{P}_e(G)
\arrow[d, "{\Pi}"]
\\
{\Sigma \times \Sigma}
\arrow[r, "\circ"]
&
{\Sigma}
\end{tikzcd}
\]

 Neither the planar construction nor those below record the duration $T$. Duration can be retained by the Lie-group target $\Sigma\times(\mathbb R,+)$, using $(T,\gamma)\mapsto(\Pi(T,\gamma),T)$. The time coordinate adds under composition, and the image has nonnegative time coordinates. 

\section{Rotational Trajectories and their Concatenation} \label{pathsec}

Let $(G,\cdot)$ be a matrix Lie group (in the present context
$\SO3$) with operation $\cdot$ being matrix multiplication, and with identity element $e$ being an identity matrix of
appropriate dimension.
A \emph{based trajectory of duration $T\geq 0$} is a piecewise-$C^1$ map
\[
\gamma : [0,T] \to G
\]
satisfying
\[
\gamma(0) = e.
\]
The endpoint of the trajectory is $\gamma(T)\in G$.
Trajectories are allowed to have different durations.

\subsection{Composition}\label{sec:path-composition}

Let
\[
\gamma_1 : [0,T_1] \to G, \qquad
\gamma_2 : [0,T_2] \to G
\]
be based trajectories with $\gamma_1(0)=\gamma_2(0)=e$.
Define their \emph{raw concatenation} $\gamma_1 \star \gamma_2 : [0,T_1+T_2] \to G$ by
\[
(\gamma_1 \star \gamma_2)(t) =
\begin{cases}
\gamma_1(t), & 0 \le t \le T_1, \\[6pt]
\gamma_1(T_1)\cdot\gamma_2(t - T_1), & T_1 \le t \le T_1 + T_2 .
\end{cases}
\]

This definition ensures continuity at the splice point $t=T_1$, since
\[
\gamma_1(T_1) = \gamma_1(T_1) \cdot \gamma_2(0),
\]
and the concatenated trajectory remains based:
\[
(\gamma_1 \star \gamma_2)(0) = e.
\]
The endpoint satisfies
\[
(\gamma_1 \star \gamma_2)(T_1+T_2) = \gamma_1(T_1) \cdot \gamma_2(T_2),
\]
so concatenation corresponds to group multiplication of endpoints.

\subsection{Associativity}

Let
\[
\gamma_i : [0,T_i] \to G, \qquad i=1,2,3,
\]
be based trajectories with $\gamma_i(0)=e$.
Then both
\[
(\gamma_1 \star \gamma_2)\star \gamma_3
\quad \text{and} \quad
\gamma_1 \star (\gamma_2 \star \gamma_3)
\]
are defined on $[0,T_1+T_2+T_3]$.

A direct computation shows that both compositions are given by
\[
\gamma(t)=
\begin{cases}
\gamma_1(t), & 0 \le t \le T_1, \\[4pt]
\gamma_1(T_1) \cdot \gamma_2(t-T_1), & T_1 \le t \le T_1+T_2, \\[4pt]
\gamma_1(T_1) \cdot \gamma_2(T_2) \cdot \gamma_3(t-(T_1+T_2)),
& T_1+T_2 \le t \le T_1+T_2+T_3 .
\end{cases}
\]

Therefore, with this definition of $\star$,
\begin{equation}
((\gamma_1 \star \gamma_2)\star \gamma_3)(t)
=
(\gamma_1 \star (\gamma_2 \star \gamma_3))(t)
\label{asstraj}
\end{equation}
for all $t \in [0,T_1+T_2+T_3]$\,.

Hence, the set of duration-tagged based trajectories in $G$ with variable durations
forms a monoid under raw concatenation,
with duration addition and endpoint multiplication.

 The identity trajectory is the zero-duration path
$$\chg{\iota}(t) := e \,\,\,{\rm for}\,\,\, t \in \{0\}$$
 and it satisfies
$$ \gamma \star \chg{\iota} = \chg{\iota} \star \gamma = \gamma \,.$$

In the planar case we can keep track of the exact finite-dimensional synopsis resulting from the concatenation of rotational trajectories without needing
to keep the trajectories themselves. But in the spatial case, no single integer winding counter has the analogous role. However, a hierarchy of finite-dimensional descriptors of trajectories can be kept. Moreover, a full signature determines the reduced path only up to tree-like equivalence unless the trajectories satisfy further requirements and one keeps track of a strictly increasing time coordinate~\citep{HamblyLyons2010,LeeGhrist2020}. As such, a finite truncation of a signature on its own is not a complete description for arbitrary trajectories.

\section{Group Extensions Using Integral of Angular Velocity} \label{hybridsec}

In the planar case, total orientation could be obtained by simply integrating the angular rate, as stated in (\ref{intxeq}). In the 3D case this is not the case, but nevertheless information beyond a terminal orientation can be recorded by integrating angular velocity, which is a local property that can be computed directly from the trajectory in $\SO3$.

Let
\[
R(t)\in \SO3,
\qquad t\in[0,T],
\]
 be a based, piecewise-$C^1$ rotational trajectory with $R(0)=I$. Its angular velocities are piecewise continuous.  We consider both the body-fixed and
space-fixed angular velocities.

The body-fixed angular velocity is defined by
\[
\Omega_b(t)
=
R(t)^{-1}\dot R(t),
\]
while the space-fixed angular velocity is
\[
\Omega_s(t)
=
\dot R(t)R(t)^{-1}.
\]
Here $\omega_b=(R^{-1}\dot R)^\vee$, $\omega_s=(\dot R R^{-1})^\vee$, and $\Omega_\bullet=\omega_\bullet^{\wedge}$, so lowercase symbols are vectors and uppercase symbols are matrices.

In real-world applications $R(t)$ might not be known in advance and the
body-fixed angular velocity $\omega_b(t)$ is provided directly by gyroscopes, and $R(t)$ is then obtained by numerically solving the ordinary differential equation
\[
\dot{R}(t) = R(t) \,\Omega_b(t) \,.
\]
The space-fixed version of angular velocity is  obtained simply as
$$ \omega_s(t) = R(t) \, \omega_b(t) \,.$$

From these, it is simple to compute
\[
x_b(T)
:=
\int_0^T \omega_b(t)\,dt
\,\,\,\,{\rm and} \,\,\,\,
x_s(T)
:=
\int_0^T \omega_s(t)\,dt.
\]
These are both vectors in $\mathbb{R}^3$ and
$x_b^\wedge(T),x_s^\wedge(T)\in\mathfrak{so}(3)$.

First-order augmented orientation states can be defined as  $(R(T),x_b(T))$ or $(R(T),x_s(T))$ or $(R(T),x_b(T), x_s(T))$.
 The following subsection defines composition laws for two based path segments $\gamma_i:[0,T_i]\to\SO3$, with $R_i:=\gamma_i(T_i)$ and $x_{b,i}:=\int_0^{T_i}\omega_{b,i}(t)\,dt$; the subscript $i$ labels the segment, and the spatial quantities are defined analogously.

\subsection{Concatenation of Paths}

Consider two based rotational paths
\[
\gamma_1:[0,T_1]\rightarrow \SO3,
\qquad
\gamma_2:[0,T_2]\rightarrow \SO3,
\]
with endpoint rotations $R_1$ and $R_2$,
respectively. Their concatenation gives the end point
\[
R_{12}=R_1 R_2.
\]

For the body-fixed integral,
the concatenation law is additive:
\[
x_{b,12}
=
x_{b,1}+x_{b,2}.
\]
 The shift $\gamma_2(t-T_1)\mapsto R_1\gamma_2(t-T_1)$, with $R_1=\gamma_1(T_1)$, leaves $\Omega_b=R^{-1}\dot R$ and hence $\omega_b$ unchanged because the constant left factor cancels.

In contrast, for the space-fixed integral,
the second segment must be transported by the rotation generated by the
first segment.
The adjoint action on skew matrices is $\Ad_R(X)=RXR^{-1}$; under the hat identification, its vector form is $\Ad_Rx=Rx$.  Hence
\[
x_{s,12}
=
x_{s,1}
+
\operatorname{Ad}_{R_1} x_{s,2}.
\]
This is because the shift  $\gamma_2(t-T_1)\mapsto R_1\gamma_2(t-T_1)$ leads to the adjoint action
$R_1 \Omega_{s,2} R_1^T.$
Since for \(\SO3\) after converting to the spatial angular velocity vector
\[
\operatorname{Ad}_{R}x=Rx,
\]
this becomes
\[
x_{s,12}
=
x_{s,1}+R_1x_{s,2}.
\]

Therefore the combined first-order concatenation law is
\[
(R_1,x_{s,1},x_{b,1})
\circ
(R_2,x_{s,2},x_{b,2})
=
(
R_1R_2,\;
x_{s,1}+R_1x_{s,2},\;
x_{b,1}+x_{b,2}
).
\]

\begin{proposition}[First-order rotational synopsis]
The map $\Pi_1(\gamma)=(R(T),x_s(\gamma),x_b(\gamma))$ is an endpoint-retaining synopsis into $\Sigma_1=(\SO3\ltimes\mathbb R_s^3)\times\mathbb R_b^3$ with the displayed product.
\end{proposition}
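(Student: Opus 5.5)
We need to check four things: that $\Sigma_1$ is a finite-dimensional Lie group under the displayed product, that $\Pi_1$ sends the zero-duration path $\iota$ to the identity, that $\Pi_1$ is multiplicative under raw concatenation, and that the projection $p(R,x_s,x_b)=R$ is a homomorphism with $p\circ\Pi_1(T,\gamma)=\gamma(T)$.

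\emph{Group structure.} $\SO3$ acts on $\R^3$ by the linear (adjoint) action $x\mapsto Rx$, so $\SO3\ltimes\R^3_s$ is the standard semidirect product; it is isomorphic to $\mathrm{SE}(3)$ via the $4\times4$ matrices with blocks $R$ and $x_s$. Taking the direct product with the abelian group $(\R^3_b,+)$ gives a $9$-dimensional matrix Lie group, realized block-diagonally with a $4\times 4$ translation block for $x_b$. Associativity, the identity $(I,0,0)$, and the inverse $(R^T,-R^Tx_s,-x_b)$ then follow from matrix multiplication, so no direct computation is required. The map $p$ is the composite of coordinate projections, and each of these is a homomorphism.

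\emph{Homomorphism property.} For $\iota$ we have $T=0$, so both integrals vanish and $R(0)=I$; hence $\Pi_1(\iota)=(I,0,0)$. For $\gamma_1\star\gamma_2$, split $\int_0^{T_1+T_2}$ at $t=T_1$. On $[0,T_1]$ the path equals $\gamma_1$, so that piece contributes $x_{s,1}$ and $x_{b,1}$. On $[T_1,T_1+T_2]$ the path is $R_1\gamma_2(t-T_1)$, with derivative $R_1\dot\gamma_2(t-T_1)$, since $\gamma_2$ is piecewise $C^1$ and the splice point contributes no measure. There $\Omega_b=\gamma_2^{-1}R_1^{-1}R_1\dot\gamma_2=\Omega_{b,2}(t-T_1)$ and $\Omega_s=R_1\dot\gamma_2\gamma_2^{-1}R_1^{-1}=R_1\Omega_{s,2}(t-T_1)R_1^T$. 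The identity $(RXR^T)^\vee=RX^\vee$ for $R\in\SO3$ converts the latter to $\omega_s=R_1\omega_{s,2}(t-T_1)$. After the change of variables $\tau=t-T_1$, and pulling the constant $R_1$ outside the integral, we obtain $x_{b,12}=x_{b,1}+x_{b,2}$ and $x_{s,12}=x_{s,1}+R_1x_{s,2}$. The endpoint $R_1R_2$ was established in Section~\ref{sec:path-composition}. Together these give exactly the displayed product, so $\Pi_1(\gamma_1\star\gamma_2)=\Pi_1(\gamma_1)\circ\Pi_1(\gamma_2)$.

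\emph{Main obstacle.} The only step that is not bookkeeping is the equivariance identity $(RX R^T)^\vee = R X^\vee$, which is needed to pass from the matrix adjoint action to the vector form. It follows from $R(a\times b)=(Ra)\times(Rb)$ for $\det R=+1$: apply both sides to $Ry$ and compare. We also need the fact that piecewise continuity of the angular velocities makes all the integrals well defined, so the splitting and change of variables are valid.
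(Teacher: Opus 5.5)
Your proposal is correct and takes essentially the same route as the paper. Like the paper, you split each integral at the splice, observe that $\Omega_b$ is unchanged and $\Omega_s$ is conjugated by $R_1$ on the second segment, and read off the identity $(I,0,0)$, the inverse, and the endpoint projection. Your extra pieces are the explicit matrix embedding to settle the group axioms (the paper gives the analogous $7\times7$ realization $M_1$ immediately afterwards) and the explicit checks of $(RXR^T)^\vee=RX^\vee$ and $\Pi_1(\iota)=(I,0,0)$, which the paper leaves implicit.
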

\begin{proof}
On the second segment, $R_{12}=R_1R_2$, hence $\Omega_{b,12}=\Omega_{b,2}$ and $\Omega_{s,12}=R_1\Omega_{s,2}R_1^{-1}$. Splitting each integral at the splice gives the displayed product. Its identity is $(I,0,0)$ and its inverse is $(R^{-1},-R^{-1}x_s,-x_b)$. The projection $(R,x_s,x_b)\mapsto R$ recovers the endpoint, proving the commuting square for this construction.
\end{proof}

Equivalently, the corresponding group structure may be written as
\[
\left(
\SO3\ltimes\mathbb R^3_s
\right)
\times
\mathbb R^3_b.
\]

The semidirect product appears because the space-fixed integral transforms
under the adjoint action of \(\SO3\), while the body-fixed integral is
purely additive.

The endpoint orientation \(R(T)\) alone forgets the rotational history of the motion.  The augmented variables
\[
(R,x_b),
\qquad
(R,x_s),
\qquad
(R,x_s,x_b),
\]
retain coarse path information through integrated angular velocity.

These first-order variables may be viewed as continuous analogues of
lifted orientation variables or winding counters used in the planar theory of
total orientation.

\subsection{Matrix Representations of These Group Extensions}
In the following block matrices, $I=I_3$, $O=0_{3\times3}$, and $0_3$ is the zero column vector. The symbol $\oplus$ denotes block-diagonal concatenation. The group $\mathrm{SE}(3)$ consists of matrices $\left(\begin{smallmatrix}R&p\\0_3^T&1\end{smallmatrix}\right)$ with $R\in\SO3$ and $p\in\mathbb R^3$, under matrix multiplication.

The composition law for integrated body-fixed angular velocities can be reflected in matrix form as
\[
M_b(x_b)
=
\begin{pmatrix}
I & x_b\\
0_3^T & 1
\end{pmatrix}.
\]
Matrix multiplication yields
\[
M_b(x_{b,1})M_b(x_{b,2})
=
M_b(x_{b,1}+x_{b,2}).
\]
Including the joint information about rotation, we can write
$$ M_b(R, x_b) \,\doteq\, R \oplus M_b(x_b) =
\begin{pmatrix}
R & O & 0_3 \\
O & I & x_b\\
0_3^T & 0_3^T & 1
\end{pmatrix}.
$$
For the space-fixed case
\[
M_s(R,x_s)
=
\begin{pmatrix}
R & x_s\\
0_3^T & 1
\end{pmatrix}.
\]
Then
\[
M_s(R_1,x_{s,1})
M_s(R_2,x_{s,2})
=
M_s(
R_1R_2,
\;
x_{s,1}+R_1x_{s,2}
).
\]

 $M_s(R,x_s)$ is an $\mathrm{SE}(3)$ matrix. Set $p(t)=x_s(t)$ and define the auxiliary body translational velocity $v_b=R^T\dot p$. Then $\dot p=R\omega_b=\omega_s$, so the body twist, consisting of translational and angular velocities, is $(v_b,\omega_b)=(\omega_b,\omega_b)$. This is a mathematical embedding; $p$ does not represent a physical spacecraft translation. 

Suppose both integrated angular velocities are retained as $
(R,x_s,x_b)$.
A low-dimensional faithful representation is
$$  M_1(R,x_s,x_b) \doteq
\begin{pmatrix}
R & O & x_s \\
O & I & x_b\\
0_3^T &0_3^T & 1
\end{pmatrix}.
$$
The representation $M_1$ is $7\times7$.
 This representation satisfies
\[
M_1(R_1,x_{s,1},x_{b,1})
M_1(R_2,x_{s,2},x_{b,2})
=
M_1(
R_1R_2,\;
x_{s,1}+R_1x_{s,2},\;
x_{b,1}+x_{b,2}
)
\]
 and matches path concatenation.

The Magnus expansion extends these first-order descriptors by retaining higher homogeneous Lie terms~\citep{Magnus1954,Blanes2009}.

\section{The Magnus Expansion and Related Groups } \label{magnussec}

For a Lie-algebra-valued generator, the Magnus expansion expresses the logarithm of the time-ordered flow as homogeneous Lie polynomials~\citep{Magnus1954,Blanes2009}. The conventions and terms required for the subsequent constructions are given below.

 For $R(0)=I$, whenever the Magnus series converges, it expresses the solution of
\[
\dot R(t)=R(t) \Omega_b(t)
\,\,\,
{\rm and}
\,\,\,
\dot R(t)=\Omega_s(t)R(t)
\]
in the form
\[
R(T)=\exp(\Xi(T)),
\]
where $\Omega_b = \omega_b^{\wedge}$, $\Omega_s = \omega_s^{\wedge}$, and
\[
\Xi(T)
=
\Xi_1+\Xi_2+\Xi_3+\cdots
\]
is Lie-algebra valued.
For a general initial attitude, the body formula applies to $R(0)^TR(T)$ and the spatial formula to $R(T)R(0)^T$. Independently of convergence, each homogeneous Magnus coefficient is defined by its finite iterated-integral formula.

We use $0<t_1<\cdots<t_k<T$ throughout, so $t_1$ is the earliest time. In particular,
\[
\int_{0<t_1<t_2<T}F(t_1,t_2)\,dt_1dt_2
:=\int_0^T\int_0^{t_2}F(t_1,t_2)\,dt_1dt_2.
\]
The extension to higher nested integrals is analogous.
We write $[A,B]=AB-BA$ and $\operatorname{ad}_A(B)=[A,B]$. For the body equation $\dot R=R\Omega_b$, the first three terms are
\begin{align}
\Xi_{1,b}
&=\int_0^T\Omega_b(t)\,dt,\label{eq:body-magnus-first}\\
\Xi_{2,b}
&=\frac12\int_{0<t_1<t_2<T}
[\Omega_b(t_1),\Omega_b(t_2)]\,dt_1dt_2,\label{eq:body-magnus-second}\\
\Xi_{3,b}
&=\frac16\int_{0<t_1<t_2<t_3<T}\Big(
[\Omega_b(t_1),[\Omega_b(t_2),\Omega_b(t_3)]]\nonumber\\
&\hspace{42mm}+[\Omega_b(t_3),[\Omega_b(t_2),\Omega_b(t_1)]]
\Big)\,dt_1dt_2dt_3.\label{eq:body-magnus-third}
\end{align}
Writing the integration simplex in this order only relabels dummy variables. For example,
\[
-\frac12\int_{0<t_2<t_1<T}[\Omega(t_1),\Omega(t_2)]\,dt_2dt_1
=\frac12\int_{0<t_1<t_2<T}[\Omega(t_1),\Omega(t_2)]\,dt_1dt_2.
\]
The sign on the right follows from antisymmetry of the bracket. Reversing the inequalities alone, while keeping the integrand and prefactor fixed, would change the result. The third-order expression is unchanged under simultaneous exchange of $t_1$ and $t_3$, because its two summands interchange.

For the spatial equation $\dot R=\Omega_sR$, the first term has the same form, while the second term is $-\tfrac12\int_{0<t_1<t_2<T}[\Omega_s(t_1),\Omega_s(t_2)]\,dt_1dt_2$. This sign difference comes from the side on which the generator acts, not from the names of the integration variables. A general Magnus term is not a single right-nested commutator. A convention-safe recurrence is obtained by writing $\Xi=\sum_{n\geq1}\Xi_n$, setting $\dot\Xi_1=\Omega$, $\Xi_n(0)=0$, and, for $n\geq2$,
\begin{equation}
\dot\Xi_n=
\sum_{j=1}^{n-1}\frac{c_j}{j!}
\sum_{\substack{r_1+\cdots+r_j=n-1\\r_\ell\geq1}}
\operatorname{ad}_{\Xi_{r_1}}\cdots\operatorname{ad}_{\Xi_{r_j}}\Omega,
\end{equation}
 where $B_j$ are the Bernoulli numbers, defined by $z/(e^z-1)=\sum_{j\geq0}B_jz^j/j!$, with $B_0=1$ and $B_1=-\tfrac12$, $c_j=B_j$ for the spatial equation and $c_j=(-1)^jB_j$ for the body equation~\citep{Blanes2009}. Algebraic use of finite graded coordinates is independent of convergence of the evaluated infinite series; a standard sufficient condition is $\int_0^T\|\Omega(t)\|_2dt<\pi$~\citep{Blanes2009}.
Here $\|\Omega\|_2$ is the induced Euclidean operator norm; for $\Omega=\omega^\wedge\in\mathfrak{so}(3)$ it equals the Euclidean vector norm $\|\omega\|_2$.

For
\[
\mathfrak{so}(3)\cong\mathbb R^3,
\]
the Lie bracket becomes the cross product:

\[
[\Omega_1,\Omega_2]^{\vee} =\omega_1\times \omega_2.
\]

The Magnus expansion can be viewed as a reordering of terms in the definition of the state-transition matrix consistent with the Baker-Campbell-Hausdorff formula. Keeping only the first term yields exactly what was presented in the previous section. But keeping more terms builds a hierarchy of groups.

\subsection{Matrix Representation of the Body-Fixed Step-2 Magnus Group}

The Step-2 Magnus Groups are defined as follows.
As before, let
\[
x
=
\int_0^T \omega(t)\,dt
\,\,\,\,{\rm and}\,\,\,\,
b
=
\frac12
\int_{0<t_1<t_2<T}
\omega(t_1)\times\omega(t_2)
\,dt_1dt_2.
\]
With the chronological convention in \eqref{eq:body-magnus-second}, this $b$ equals $\Xi_{2,b}^{\vee}$ for the body equation.
The pair $(x,b)$
provides the step-2 Magnus coordinates.
The composition law for this is
\[
(x_1,b_1)\circ(x_2,b_2)
=
\left(
x_1+x_2,
\;
b_1+b_2+\frac12 x_1\times x_2
\right).
\]
 This is the free step-2 nilpotent Lie group on three generators (Section~\ref{sec:allorders}).

For the step-2 Magnus structure associated with
$\mathfrak{so}(3)$, the second-order bracket coordinate is naturally a vector
$b\in\mathbb R^3$,
or equivalently a skew-symmetric matrix
$b^\wedge\in\mathfrak{so}(3)$.

A dimensionally consistent realization is therefore obtained using block
sizes $1,\;3,\;3$ as
\[
\mathcal X(x,b)
=
\begin{pmatrix}
0 & 0 & 0\\
x & 0_{3\times3} & 0\\
b & \frac12{x}^{\wedge} & 0_{3\times3}
\end{pmatrix},
\]
where
\[
x,b\in\mathbb R^3.
\]
This is a \(7\times7\) nilpotent matrix.

Since
\[
\mathcal X^2=0,
\]
because its only potentially nonzero bottom-left block is $\tfrac12x^\wedge x=0$, the exponential truncates:
\[
\exp(\mathcal X)=I+\mathcal X.
\]

The corresponding group element is therefore
\[
\mathcal M(x,b)
=
I+\mathcal X(x,b)
=
\begin{pmatrix}
1 & 0 & 0\\
x & I_3 & 0\\
b & \frac12x^\wedge & I_3
\end{pmatrix}.
\]

Direct multiplication yields
\[
\mathcal M(x_1,b_1)\mathcal M(x_2,b_2)
=
\mathcal M
\left(
x_1+x_2,\;
b_1+b_2+\frac12 x_1\times x_2
\right).
\]

Thus matrix multiplication realizes the stated step-two law.

The second-order term may alternatively be represented either as a
vector $b\in\mathbb{R}^3$
or as a skew-symmetric matrix $
B\in\mathfrak{so}(3)$. The latter is more convenient for matrix representation, in which case
define $\mathcal M_B(x,B):=\mathcal M(x,B^\vee)$. Then
matrix multiplication yields

\[
\mathcal M_B(x_1,B_1)\mathcal M_B(x_2,B_2)
=
\mathcal M_B(
x_1+x_2,
\;
B_1+B_2+\frac12[x_1^\wedge,x_2^\wedge]
).
\]

For $\mathfrak{so}(3)$,
$[x_1^\wedge,x_2^\wedge]=(x_1\times x_2)^\wedge$.
The full matrix representation in the body-fixed case is then
$$ \mathcal M_B(R,x,B) = R \oplus \mathcal M_B(x,B)$$
with the simple direct-product law
\[
\mathcal M_B(R_1,x_1,B_1)\mathcal M_B(R_2,x_2,B_2)
=
\mathcal M_B(R_1 R_2,
x_1+x_2,
\;
B_1+B_2+\frac12[x_1^\wedge,x_2^\wedge]
).
\]
\subsection{Matrix Representation of the Space-Fixed Step-2 Transported Coordinate Group}

For the space-fixed formulation, it is convenient to distinguish between
$b\in\mathbb{R}^3$
and its associated skew-symmetric matrix
$B\in\mathfrak{so}(3)$,
where $B y=b\times y$.

For the space-fixed equation, let $b_s^{\mathrm{sig}}=-\Xi_{2,s}^{\vee}$ denote the chronological log-signature coordinate.
In the following product and matrices, $x=x_s$ and $b=b_s^{\mathrm{sig}}$. The space-fixed step-2  group law for this transported coordinate is

\[
(R_1,x_1,b_1)\circ_s(R_2,x_2,b_2)
=
\left(
R_1R_2,\;
x_1+R_1x_2,\;
b_1+R_1b_2+\frac12 x_1\times(R_1x_2)
\right).
\]

A convenient matrix representation with block sizes \(1,3,3\) is
\[
\mathcal M_s(R,x,b)
=
\begin{pmatrix}
1 & 0 & 0\\
x & R & 0\\
b & \frac12 {x}^{\wedge} R & R
\end{pmatrix}.
\]

Here $R\in\SO3$ and $x\in\mathbb R^3$.
Direct multiplication yields
\[
\mathcal M_s(R_1,x_1,b_1)
\mathcal M_s(R_2,x_2,b_2)
=
\mathcal M_s
\left(
R_1R_2,\;
x_1+R_1x_2,\;
b_1+R_1b_2+\frac12 x_1\times(R_1x_2)
\right).
\]

Thus the matrix representation exactly realizes the semidirect-product
structure of the space-fixed step-2 Magnus group.

The corresponding skew-matrix coordinate satisfies
$ B=b^{\wedge}$,
so that under transport by rotation,
\[
B
\mapsto
RBR^T.
\]

Hence the vector formulation transports by
\[
b\mapsto Rb,
\]
while the skew-matrix formulation transports by conjugation:
\[
B\mapsto RBR^T.
\]

If the actual spatial Magnus vector $\Xi_{2,s}^{\vee}$ is stored instead, its cross term is $-\tfrac12x_1\times(R_1x_2)$. This provides a hierarchy for constructing finite-dimensional Lie groups of increasing dimension, which capture higher levels of path information. A related framework is described in the next section.

\section{Iterated Integrals and Chen's Theory of Path Signatures} \label{chenrev}

Chen's signature of a vector-valued path is the collection of all iterated
integrals of that path~\citep{Chen1957,Chen1977}.  For an angular-velocity signal
$\omega:[0,T]\to\mathbb R^3$, define its integrated path $z(t)=\int_0^t\omega(s)ds$; the signature below is $S(z)$, with $dz=\omega(t)dt$, not an iterated integral against $d\omega$. Its signature is

\[
S(z)=(1,S_1(z),S_2(z),\ldots)\in\widehat T(V),
\qquad \widehat T(V):=\prod_{k=0}^{\infty}V^{\otimes k},\quad V=\mathbb R^3.
\]
Here $\widehat T(V)$ is the completed tensor algebra, with $V^{\otimes0}=\mathbb R$. Unlike an algebraic direct sum, it allows nonzero components at infinitely many degrees. Once a body or spatial angular-velocity convention is fixed, we write $S(\gamma):=S(z)$,
where $S_0(\gamma)=1$ and for $k>0$
$$ S_k(\gamma) = \int_{0<t_1<\cdots<t_k<T}
\omega(t_1)\otimes\cdots\otimes\omega(t_k)
\,dt_1\cdots dt_k. $$
Chen's identity~\citep{Chen1957,Chen1977} states that for ordinary concatenation of vector-valued
 paths and, for the based rotational concatenation used here, for body-fixed signatures,
\[
S(\gamma_1\star\gamma_2)=S(\gamma_1)S(\gamma_2),
\]
where multiplication is tensor multiplication.  The signature truncated at degree $N$ takes values in the finite-dimensional group of group-like elements of the truncated tensor algebra. The larger coordinate space is an ambient nilpotent group under truncated tensor multiplication, but not every arbitrary tuple of tensors is the signature of a path; genuine signatures satisfy the shuffle (group-like) relations.
These identities express a product of iterated integrals as the sum over interleavings that preserve the order within each factor. At truncation order $N$, only identities of total degree at most $N$ are imposed; \chg{Sections~\ref{sec:step2chen} and~\ref{sec:trunc-chen}} give examples and the precise group construction.

For rotational trajectories, one must first decide which angular-velocity
signal is being used.  With body-fixed angular velocity,
\[
S_b(\gamma)=S(z_b),\qquad z_b(t)=\int_0^t\omega_b(s)ds,
\]
and Chen's identity applies directly:
\[
S_b(\gamma_1\star\gamma_2)=S_b(\gamma_1)S_b(\gamma_2).
\]
With space-fixed angular velocity,
\[
S_s(\gamma)=S(z_s),\qquad z_s(t)=\int_0^t\omega_s(s)ds,
\]
but the second segment must be transported:
\[
S_s(\gamma_1\star\gamma_2)
=S_s(\gamma_1)\,\Ad_{R_1}S_s(\gamma_2),
\]
where $\Ad_{R_1}$ acts on each tensor factor.

{
The distinction between $x$ and $z$ is important. They are related as
$x = z(T)$, and $S_1 = x$, but the distinction can be seen starting with $S_2$:
$$ S_2(\gamma) = \int_{0}^{T} \int_{0}^{t_2} \omega(t_1) \,\omega^T(t_2)
\,dt_1 dt_2 =\int_{0}^{T} z(t_2)\,\omega^T(t_2) \,dt_2 \,.  $$
}

This distinction is essential.  The body-fixed signature is untwisted
under concatenation of based rotational paths, while the space-fixed
signature is twisted by the endpoint of the first segment.

A truncated body-fixed Chen synopsis of order $N$ is
\[
\mathcal S_b^{(N)}(\gamma)
=
\left(R(T),S_b^{(N)}(\gamma)\right),
\]
with multiplication
\[
(R_1,S_{b,1}^{(N)})(R_2,S_{b,2}^{(N)})
=
(R_1R_2,S_{b,1}^{(N)}S_{b,2}^{(N)}).
\]
Here the signature part multiplies directly because the body-fixed signal
concatenates directly.

A truncated space-fixed Chen synopsis of order $N$ is
\[
\mathcal S_s^{(N)}(\gamma)
=
\left(R(T),S_s^{(N)}(\gamma)\right),
\]
with multiplication
\[
(R_1,S_{s,1}^{(N)})(R_2,S_{s,2}^{(N)})
=
\left(R_1R_2,
S_{s,1}^{(N)}\,\Ad_{R_1}S_{s,2}^{(N)}
\right).
\]
The adjoint action is extended to tensors by
\[
\Ad_R(v_1\otimes\cdots\otimes v_k)
=(\Ad_Rv_1)\otimes\cdots\otimes(\Ad_Rv_k).
\]

The first-order laws are those of Section~\ref{hybridsec}; the second-order laws and their concrete matrix representations are developed below. Section~\ref{sec:allorders} proves the arbitrary-order statement, including closure, associativity, and inverses.

\subsection{Step-2 Chen Signatures}\chg{\label{sec:step2chen}}

Let \(\omega(t)\in\mathbb R^3\) denote either the body-fixed or
space-fixed angular velocity. Subscripts $b$ and $s$ on $x$ and $A$ mean that the following definitions use $\omega_b$ and $\omega_s$, respectively. The step-2 Chen signature is
\[
S^{(2)}(\omega)=(1,x,A),
\]
where
\[
x=\int_0^T \omega(t_1)\,dt_1\in\mathbb R^3,
\]
and
\[
A=
\int_{0<t_1<t_2<T}
\omega(t_1)\omega(t_2)^T\,dt_1dt_2
\in\mathbb R^{3\times3}.
\]

Equivalently,
\[
A=
\int_{0<t_1<t_2<T}
\omega(t_1)\otimes\omega(t_2)\,dt_1dt_2.
\]

Thus the Chen signature retains the second-order tensor coordinate
\(A\in\mathbb R^3\otimes\mathbb R^3\). For an actual path signature it obeys the shuffle constraint $A+A^T=xx^T$.

\subsection{Step-2 Chen Group Law}

For two step-2 signatures \((x_1,A_1)\) and \((x_2,A_2)\), concatenation
gives
\[
(x_1,A_1)\circ (x_2,A_2)
=
\left(
x_1+x_2,\;
A_1+A_2+x_1x_2^T
\right).
\]

The cross term \(x_1x_2^T\) comes from iterated integrals with one time in
the first segment and one time in the second segment, using the ordering
$0<t_1<t_2<T$.

Accordingly, the step-2 signature group is
\[
G_{\mathrm{sig}}^{(2)}(\mathbb R^3)=
\{(x,A):A+A^T=xx^T\},
\]
with the displayed product, identity $(0,0)$, and inverse $(-x,-A+xx^T)$. It is six dimensional. All pairs $(x,A)$ form a twelve-dimensional ambient nilpotent group under the same product, but only the constrained subgroup consists of group-like step-2 signatures.

\subsection{Relation to Step-2 Magnus Coordinates}

The step-2 Magnus coordinate keeps only the Lie part of the second-order
Chen tensor:
\[
B=-\frac12(A-A^T)\in\mathfrak{so}(3).
\]

Equivalently, if \(B=b^{\wedge}\), then
\[
b=-\frac12\,(A-A^T)^{\vee}\in\mathbb R^3.
\]

For genuine step-2 signatures, $A+A^T=xx^T$, so $A=\tfrac12xx^T-B$. Hence $(x,A)$ and the full step-2 log-signature coordinate $(x,B)$ carry exactly the same information; the larger twelve-dimensional space of arbitrary pairs $(x,A)$ is only an ambient group.

This equivalence is specific to the retained second-order information. Section~\ref{sec:third-distinction} shows that the full third-order Chen group has dimension $14$, whereas three evaluated Magnus vectors form a $9$-dimensional quotient group $M_3$. Both are groups, but evaluation discards five independent third-order free-Lie coordinates.

\subsection{Matrix Representation of the Step-2 Chen Group}

The step-2 Chen group law
\[
(x_1,A_1)\circ(x_2,A_2)
=
\left(
x_1+x_2,\;
A_1+A_2+x_1x_2^T
\right)
\]
can be represented by finite-dimensional matrices.

A dimensionally explicit scalar matrix representation uses
\[
\operatorname{vec}(A)\in\mathbb R^9.
\]
Explicitly, writing out $A$ as its columns, $A = [a_1,a_2,a_3]$, then
$$ \operatorname{vec}(A) =
\left(\begin{array}{c}
a_1 \\
a_2 \\
a_3 \end{array}\right) \,.
$$
With column-wise vectorization, define
the following matrix. Here $\otimes$ denotes the matrix Kronecker product, which represents the tensor product in the chosen bases; in particular, $(I_3\otimes x)y=\operatorname{vec}(xy^T)$:
\[
\mathcal C(x,A)
=
\begin{pmatrix}
1 & 0 & 0\\
x & I_3 & 0\\
\operatorname{vec}(A) & I_3\otimes x & I_9
\end{pmatrix},
\]
where the block sizes are \(1,3,9\).

Then, with a consistent vectorization convention,
\[
\mathcal C(x_1,A_1)\mathcal C(x_2,A_2)
=
\mathcal C
\left(
x_1+x_2,\;
A_1+A_2+x_1x_2^T
\right).
\]

\subsection{Matrix Representation of the Body-Fixed Full Group}

For the body-fixed convention, the orientation part and Chen part form
a direct product:
\[
\SO3\times G_{\mathrm{sig}}^{(2)}(\mathbb R^3).
\]

Thus one may write
\[
\mathcal M_b(R,x_b,A_b)
=
R\oplus \mathcal C(x_b,A_b),
\]
with multiplication
\[
\mathcal M_b(R_1,x_{b,1},A_{b,1})
\mathcal M_b(R_2,x_{b,2},A_{b,2})
=
\mathcal M_b
\left(
R_1R_2,\;
x_{b,1}+x_{b,2},\;
A_{b,1}+A_{b,2}+x_{b,1}x_{b,2}^T
\right).
\]

\subsection{Matrix Representation of the Space-Fixed Full Group}

For the space-fixed convention, the orientation acts on the Chen
coordinates by
\[
\alpha_R(x,A)=(Rx,RAR^T).
\]

Therefore the space-fixed group is the semidirect product
\[
\SO3\ltimes G_{\mathrm{sig}}^{(2)}(\mathbb R^3).
\]

The multiplication law is
\[
(R_1,x_1,A_1)(R_2,x_2,A_2)
=
\left(
R_1R_2,\;
x_1+R_1x_2,\;
A_1+R_1A_2R_1^T+x_1(R_1x_2)^T
\right).
\]

A matrix representation is obtained by allowing \(R\) to act by
conjugation on the Chen matrix representation:
\[
D(R)\mathcal C(x,A)D(R)^{-1}
=
\mathcal C(Rx,RAR^T).
\]

Using the \(\operatorname{vec}\)-form, this action matrix is
\[
D(R)
=
\begin{pmatrix}
1 & 0 & 0\\
0 & R & 0\\
0 & 0 & R\otimes R
\end{pmatrix}.
\]

With the convention
\[
\mathcal M_s(R,x,A)=\mathcal C(x,A)D(R),
\]
one obtains
\[
\mathcal M_s(R_1,x_1,A_1)
\mathcal M_s(R_2,x_2,A_2)
=
\mathcal M_s
\left(
R_1R_2,\;
x_1+R_1x_2,\;
A_1+R_1A_2R_1^T+x_1(R_1x_2)^T
\right).
\]

For direct use, the spatial representation just defined is
\[
\mathcal M_s(R,x,A)=
\begin{pmatrix}
1&0&0\\
x&R&0\\
\operatorname{vec}(A)&(I_3\otimes x)R&R\otimes R
\end{pmatrix}.
\]
It is a $13\times13$ representation of the ambient group and restricts to the genuine signature subgroup $A+A^T=xx^T$. Here column-wise vectorization gives $(I_3\otimes x)y=\operatorname{vec}(xy^T)$ and $\operatorname{vec}(RAR^T)=(R\otimes R)\operatorname{vec}(A)$. These identities establish the blocks without a second derivation.

The body and spatial constructions above retain the endpoint under direct and semidirect multiplication, respectively. Their extension and the limits of the Chen--Magnus equivalence are established next.

\section{Groups at Every Finite Truncation Order}\label{sec:allorders}

The second-order constructions extend to every integer $N\geq1$. The essential operation is to discard all terms of degree greater than $N$ in a graded algebra. This gives an exact finite-dimensional group, independently of whether an infinite Magnus series converges. We give the argument explicitly, including the endpoint rotation and the body/space convention. The tensor and free-Lie ingredients are classical~\citep{Chen1954,Chen1957,Reutenauer1993}; the purpose here is to establish the complete rotational synopsis construction and distinguish its two higher-order versions.

\subsection{Truncated Chen signatures}\chg{\label{sec:trunc-chen}}

Let $V=\mathbb R^3$ and define the truncated tensor algebra
\[
T^{\leq N}(V)=\bigoplus_{k=0}^N V^{\otimes k},\qquad
I_N=\bigoplus_{k=1}^N V^{\otimes k}.
\]

Here $V$ is the vector space of angular-velocity increments. The notation $V^{\otimes k}$ denotes the $k$-fold tensor product of $V$ with itself, with $V^{\otimes0}=\mathbb R$. The direct sum means that an element is a tuple
\[
 A=(A_0,A_1,\ldots,A_N),\qquad A_k\in V^{\otimes k}.
\]
Thus $A_0$ is a scalar, $A_1$ is a three-vector, $A_2$ has entries $(A_2)_{ij}$, and $A_k$ has $3^k$ entries $(A_k)_{i_1\cdots i_k}$. The subscript $k$ specifies tensor degree, not a vector component or a path-segment number. The symbol $\leq N$ means that degrees zero through $N$ are retained. The space $I_N$ consists of the tuples whose degree-zero component is zero. These spaces will serve, respectively, as the ambient algebra and its positive-degree ideal.

For two arbitrary tuples $A,B\in T^{\leq N}(V)$, define $A\odot B$ by tensor multiplication followed by deletion of degrees greater than $N$. This definition applies to all scalar components $A_0,B_0$; the restriction $A_0=B_0=1$ is imposed only when considering elements of the group $1+I_N$:
\par

\begin{equation}\label{eq:chen-all-product}
(A\odot B)_k=\sum_{i=0}^k A_i\otimes B_{k-i},\qquad 0\leq k\leq N.
\end{equation}
Here multiplication by the scalar degree-zero component has its usual meaning.
For example, at order two,
\[
 (1,a,P)\odot(1,b,Q)=(1,a+b,P+Q+a\otimes b).
\]
This is analogous to multiplying polynomials and retaining powers through $N$, except that tensor factors retain their order: generally $a\otimes b\ne b\otimes a$.
When $A$ is the signature of the integrated angular-velocity path $z(t)=\int_0^t\omega(s)\,ds$, its components are
\[
 A_0=1,\qquad A_k=\int_{0<t_1<\cdots<t_k<T}
 \omega(t_1)\otimes\cdots\otimes\omega(t_k)\,dt_1\cdots dt_k.
\]
The earliest time is on the left. The signal $\omega$ may be body-fixed or spatial, provided one convention is used consistently. General tuples in the tensor algebra need not be signatures.

The notation $1$ means $(1,0,\ldots,0)$, so $1+I_N$ consists of tuples $(1,u_1,\ldots,u_N)$. Powers below use the product just defined:
\[
 u^{\odot0}=1,\qquad u^{\odot j}=\underbrace{u\odot\cdots\odot u}_{j\text{ factors}}\quad(j\geq1).
\]
For $u\in I_N$, every factor has positive degree, so $u^{\odot(N+1)}=0$. More generally every product of $N+1$ elements of $I_N$ vanishes; this is the meaning of $I_N^{N+1}=0$.

The tensor exponential and logarithm are defined, for $L,u\in I_N$, by
\[
 \exp_{\odot}(L)=\sum_{j=0}^N\frac{L^{\odot j}}{j!},\qquad
 \log_{\odot}(1+u)=\sum_{j=1}^N\frac{(-1)^{j+1}}{j}u^{\odot j}.
\]
They are exact finite polynomials, not numerical approximations. For $v\in V$ placed in degree one, for example,
\[
 \exp_{\odot}(v)=(1,v,\tfrac12v\otimes v,\ldots,\tfrac1{N!}v^{\otimes N}).
\]

Embed $V$ in the degree-one component of $T^{\leq N}(V)$. The Lie algebra $\mathfrak f_N(V)$ generated by $V$ is the smallest linear subspace containing these vectors and closed under $[A,B]=A\odot B-B\odot A$. Concretely, it consists of linear combinations of vectors and nested brackets containing at most $N$ vectors. For example,
\[
 [v,w]=v\otimes w-w\otimes v,\qquad
 [v,[w,z]]=v\otimes w\otimes z-v\otimes z\otimes w
 -w\otimes z\otimes v+z\otimes w\otimes v,
\]
with expressions of degree above $N$ discarded. These are formal tensor brackets; no cross product has yet been applied. At order two, for a basis $e_1,e_2,e_3$ of $V$,
\[
 \mathfrak f_2(V)=\left\{v+\sum_{i<j}c_{ij}(e_i\otimes e_j-e_j\otimes e_i):
 v\in V,\ c_{ij}\in\mathbb R\right\}.
\]
In the present context, the word \emph{free} means that only the Lie identities (bilinearity, antisymmetry, and Jacobi) and the degree cutoff are imposed. The untruncated algebra is often written $\mathfrak f(V)$; the subscript $N$ specifies the truncation here. \emph{Step-$N$ nilpotent} means that brackets with more than $N$ entries vanish. The group $G_N(V)$ below is obtained by exponentiating this space of log-signatures, rather than all of $I_N$.
\par

\begin{theorem}[Chen groups at arbitrary order]\label{thm:chen-all}
The set $1+I_N$ is a finite-dimensional nilpotent Lie group under $\odot$, with identity $1=(1,0,\ldots,0)$ and inverse
\begin{equation}\label{eq:chen-all-inverse}
(1+u)^{-1}=\sum_{j=0}^N(-u)^{\odot j},\qquad u\in I_N.
\end{equation}
Let $\mathfrak f_N(V)$ be the Lie subalgebra of $T^{\leq N}(V)$ generated by $V$, with bracket $[A,B]=A\odot B-B\odot A$. Then
\[
G_N(V)=\exp_{\odot}(\mathfrak f_N(V))
\]
is the free step-$N$ nilpotent signature group. It contains every truncated Chen signature of the integrated angular-velocity path. Its product and inverse are the restrictions of \eqref{eq:chen-all-product} and \eqref{eq:chen-all-inverse}.
\end{theorem}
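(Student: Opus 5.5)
The plan has four steps: the group structure of $1+I_N$, then $G_N(V)$ as a subgroup, then its Lie-group status, and finally the statement that signatures land in $G_N(V)$.

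\textbf{Step 1: $1+I_N$ is a nilpotent Lie group.} First I would check that $(T^{\leq N}(V),\odot)$ is an associative unital algebra. Formula \eqref{eq:chen-all-product} is the tensor product followed by the quotient by the two-sided ideal of degrees greater than $N$, so associativity and bilinearity are inherited from the tensor algebra. $I_N$ is an ideal with $I_N^{N+1}=0$.

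Closure of $1+I_N$ follows because $(1+u)\odot(1+v)=1+(u+v+u\odot v)$. For the inverse, the telescoping identity $(1+u)\odot\sum_{j=0}^N(-u)^{\odot j}=1-(-u)^{\odot(N+1)}=1$, together with the same identity on the other side, gives \eqref{eq:chen-all-inverse}.

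The product and inverse are polynomial maps on the affine space $1+I_N\cong\R^{\dim I_N}$, so this is a Lie group. It is nilpotent because the group commutator of $1+u$ and $1+v$ lies in $1+I_N^{2}$, and iterating this pushes $k$-fold commutators into degree at least $k$. Hence they vanish once $k>N$.

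\textbf{Step 2: $G_N(V)$ is a subgroup.} Next I would show that $\exp_\odot$ and $\log_\odot$ are mutually inverse bijections $I_N\leftrightarrow 1+I_N$. These are formal power-series identities that hold in any nilpotent algebra once degree $N$ is exceeded.

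To see that $G_N(V)=\exp_\odot(\mathfrak f_N(V))$ is closed under products, I would invoke the Baker--Campbell--Hausdorff formula in the nilpotent algebra $I_N$:
\[
\exp_\odot(X)\odot\exp_\odot(Y)=\exp_\odot(\mathrm{BCH}(X,Y)).
\]
Here $\mathrm{BCH}$ is a finite sum of nested brackets of $X$ and $Y$, because $I_N^{N+1}=0$. If $X,Y\in\mathfrak f_N(V)$, then $\mathrm{BCH}(X,Y)\in\mathfrak f_N(V)$ since the subalgebra is bracket-closed. Closure under inverses is immediate from $\exp_\odot(X)^{-1}=\exp_\odot(-X)$.

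\textbf{Step 3: $G_N(V)$ is a Lie group and is free step-$N$.} $G_N(V)$ is the image of a linear subspace under a polynomial diffeomorphism, so it is a closed embedded submanifold and hence a Lie subgroup. Its product and inverse are the restrictions of those on $1+I_N$. The word ``free'' refers to $\mathfrak f_N(V)$ being the free step-$N$ nilpotent Lie algebra on $V$, which is the classical identification of the Lie elements of the truncated tensor algebra. For that I would cite \citet{Reutenauer1993} rather than reprove it.

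\textbf{Step 4: signatures lie in $G_N(V)$.} This is the main obstacle. I would first treat piecewise-constant $\omega$. On a segment of duration $h$ with constant value $v$, the truncated signature is $\exp_\odot(hv)$, since the simplex has volume $h^k/k!$. Chen's identity, which turns concatenation into $\odot$, then writes the signature of a piecewise-constant path as a finite product of elements of $G_N(V)$. By Step 2, that product lies in $G_N(V)$.

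For general piecewise-continuous $\omega$, I would approximate by step functions in $L^1$. Each iterated integral is continuous in $L^1$ on bounded sets, so the truncated signatures converge. Since $G_N(V)$ is closed in $1+I_N$, the limit is also in $G_N(V)$.

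An alternative to the approximation argument would be to differentiate $S^{(N)}(t)$, using $\dot S=S\odot\omega(t)$. One would then show that $\log_\odot S$ satisfies a Magnus-type ODE whose right side stays in $\mathfrak f_N(V)$. I expect the approximation route to be cleaner.
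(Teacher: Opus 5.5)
Your proposal is correct and follows the paper's proof essentially step by step: associativity from the quotient by the degree-$>N$ ideal, the geometric-series inverse, BCH closure of $\exp_{\odot}(\mathfrak f_N(V))$, closedness via the polynomial $\log_{\odot}$, and constant segments combined with Chen's identity and $L^1$ approximation for the signature claim. The only cosmetic difference is that you prove nilpotency by showing that $k$-fold group commutators land in $1+I_N^{k}$, whereas the paper uses the fact that Lie brackets add degrees.
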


\begin{proof}
The tensors of degree greater than $N$ form a two-sided ideal in the full tensor algebra.
A two-sided ideal is a subspace stable under multiplication on either side by any algebra element. Here multiplication can only increase tensor degree. Passing to the quotient identifies all tensors of degree above $N$ with zero, which precisely implements truncation.
\par
 The quotient product is consequently associative, and $I_N^{N+1}=0$. Multiplying the finite geometric sum in \eqref{eq:chen-all-inverse} by $1+u$ on either side gives $1$. Multiplication and inversion are polynomial in the tensor entries, establishing the Lie-group assertion. A commutator of elements of positive degrees $i,j$ has degree at least $i+j$, which gives nilpotency of step at most $N$.

The finite polynomials
\[
\exp_{\odot}(L)=\sum_{j=0}^N\frac{L^{\odot j}}{j!},\qquad
\log_{\odot}(1+u)=\sum_{j=1}^N\frac{(-1)^{j+1}}{j}u^{\odot j}
\]
are mutually inverse. For $L,K\in\mathfrak f_N(V)$, the Baker--Campbell--Hausdorff (BCH) identity in this nilpotent algebra gives
the series, which expresses the logarithm of a product of exponentials,
\[
 \operatorname{BCH}(L,K)=L+K+\tfrac12[L,K]
 +\tfrac1{12}[L,[L,K]]+\tfrac1{12}[K,[K,L]]+\cdots.
\]
\par
\[
\exp_{\odot}(L)\odot\exp_{\odot}(K)
=\exp_{\odot}(\operatorname{BCH}_{\leq N}(L,K)).
\]
Here the subscript $\leq N$ retains total tensor degree at most $N$; since higher-degree terms vanish in the algebra, this truncation gives an exact group product.
Its right-hand logarithm is again in $\mathfrak f_N(V)$, and $\exp_{\odot}(L)^{-1}=\exp_{\odot}(-L)$. Thus $G_N(V)$ is a subgroup. It is closed because $\log_{\odot}$ is continuous and $\mathfrak f_N(V)$ is a finite-dimensional linear subspace.

For a constant angular-velocity segment with integrated increment $v$, direct integration gives $S^{(N)}=\exp_{\odot}(v)$. A piecewise-constant signal therefore has signature in $G_N(V)$ by Chen's identity. Approximating a piecewise-continuous angular velocity in $L^1$ by such signals makes every retained iterated integral converge; closedness then proves the assertion for the piecewise-$C^1$ rotational paths used here. Equivalently, these signatures satisfy the shuffle relations whose total degree is at most $N$. The free-Lie description of $\mathfrak f_N(V)$ is the standard one~\citep{Reutenauer1993}.
\end{proof}

In particular, the full third-order Chen coordinates $(1,x,A,C)$ obey the composition law~\eqref{eq:chen-third-product} below.
In this tuple $x\in V$, $A\in V^{\otimes2}$, and $C\in V^{\otimes3}$. The letter $A$ here denotes only the second-level tensor, unlike the generic full tuple $A$ used earlier. Subscripts $1,2$ in the following formulas label the two path segments; $12$ labels their concatenation.
\par

\begin{equation}\label{eq:chen-third-product}
\begin{aligned}
x_{12}&=x_1+x_2,\\
A_{12}&=A_1+A_2+x_1\otimes x_2,\\
C_{12}&=C_1+C_2+A_1\otimes x_2+x_1\otimes A_2.
\end{aligned}
\end{equation}
Thus retaining the third level does form a group; the genuine signature group is the subgroup satisfying the degree-two and degree-three shuffle constraints, not the whole space of arbitrary tensor entries.
Shuffle constraints are identities obtained by interleaving the time variables in products of iterated integrals while preserving their internal orders. For instance $A_{ij}+A_{ji}=x_i x_j$, or $A+A^T=xx^T$, at degree two. They explain why the tensor entries of a signature are not all independent.
\par

\subsection{Exact endpoints and spatial transport}
Here $\SO3=\{R\in\mathbb R^{3\times3}:R^TR=I,\det R=1\}$ is the rotation group. The map $\alpha_R$ below rotates every vector factor of a tensor and extends linearly to sums of tensors. A group automorphism is an invertible map preserving multiplication; thus $\alpha_R(S\odot T)=\alpha_R(S)\odot\alpha_R(T)$.

The symbols $b$, $s$, and $sb$ denote body, spatial, and combined (hybrid) coordinates. $S_b^{(N)}$ and $S_s^{(N)}$ are the signatures through degree $N$ of the corresponding integrated angular velocities. A direct product $\times$ multiplies its factors independently. A semidirect product $\ltimes_\alpha$ transports the second signature by the first rotation before multiplying:
\[
 (R_1,S_1)(R_2,S_2)=(R_1R_2,S_1\odot\alpha_{R_1}(S_2)).
\]
An endpoint-retaining synopsis stores $R(T)$ as well as the signature and converts path concatenation into this finite-dimensional multiplication.
\par

For $R\in\SO3$, define $\alpha_R$ on tensors by
\[
\alpha_R(v_1\otimes\cdots\otimes v_k)
=Rv_1\otimes\cdots\otimes Rv_k,\qquad \alpha_R(1)=1.
\]
This preserves tensor products, Lie brackets, and degree, hence restricts to a group automorphism of $G_N(V)$. Also $\alpha_{R_1R_2}=\alpha_{R_1}\alpha_{R_2}$.

\begin{theorem}[Endpoint-retaining Chen synopses]\label{thm:endpoint-all}
For every $N\geq1$, the body, spatial, and combined synopsis groups are
\[
\Sigma_{b,N}=\SO3\times G_N(V),\quad
\Sigma_{s,N}=\SO3\ltimes_{\alpha}G_N(V),\quad
\Sigma_{sb,N}=(\SO3\ltimes_{\alpha}G_N(V))\times G_N(V).
\]
In the combined coordinates their multiplication is
\begin{equation}\label{eq:hybrid-all-product}
\begin{aligned}
&(R_1,S_{s,1},S_{b,1})\circ(R_2,S_{s,2},S_{b,2})\\
&\quad=(R_1R_2,\ S_{s,1}\odot\alpha_{R_1}(S_{s,2}),\ S_{b,1}\odot S_{b,2}).
\end{aligned}
\end{equation}
The map $R(\cdot)\mapsto(R(T),S_s^{(N)},S_b^{(N)})$ is an endpoint-retaining synopsis. Dropping either signature coordinate gives the corresponding single-frame synopsis.
\end{theorem}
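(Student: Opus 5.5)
The plan has three parts: show the three sets are groups, show the synopsis map sends concatenation to the product \eqref{eq:hybrid-all-product}, and read off endpoint retention. By Theorem~\ref{thm:chen-all}, $G_N(V)$ is a Lie group under $\odot$. As noted just above, $\alpha_R$ preserves $\odot$, brackets and degree, and satisfies $\alpha_{R_1R_2}=\alpha_{R_1}\alpha_{R_2}$. So $R\mapsto\alpha_R$ is a smooth homomorphism $\SO3\to\operatorname{Aut}(G_N(V))$: it is polynomial in the entries of $R$, and it is linear on the finite-dimensional space $T^{\leq N}(V)$.

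The standard semidirect-product construction then gives $\SO3\ltimes_\alpha G_N(V)$. Its identity is $(I,1)$ and its inverse is $(R^{-1},\alpha_{R^{-1}}(S^{-1}))$. Associativity reduces to $S_1\odot\alpha_{R_1}(S_2\odot\alpha_{R_2}S_3)=S_1\odot\alpha_{R_1}S_2\odot\alpha_{R_1R_2}S_3$. Taking the direct product with a second copy of $G_N(V)$ gives $\Sigma_{sb,N}$, and $\Sigma_{b,N}$ is simply a direct product. All three are finite-dimensional Lie groups, and the projection $(R,S_s,S_b)\mapsto R$ is a homomorphism in each case.

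Next comes the homomorphism property. The zero-duration path has all iterated integrals of positive degree equal to zero, so it maps to $(I,1,1)$. For $\gamma_1\star\gamma_2$, the endpoint is $R_1R_2$ by Section~\ref{sec:path-composition}. On the second interval the body velocity is $\omega_{b,2}(t-T_1)$, because the constant left factor $R_1$ cancels. The spatial velocity is $R_1\omega_{s,2}(t-T_1)$, as in the proof of the first-order proposition.

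For each degree $k$, I would split the simplex $0<t_1<\dots<t_k<T_1+T_2$ according to how many times fall in $[0,T_1]$. Because of the ordering, the first $i$ times lie in the first segment and the remaining $k-i$ lie in the second. By Fubini, that piece factors as $S_{1,i}\otimes S_{2,k-i}$, which is exactly $(S_1\odot S_2)_k$; this is Chen's identity. In the spatial case every factor from the second segment carries $R_1$, so the second-segment integral equals $\alpha_{R_1}(S_{s,2})_{k-i}$ by multilinearity. Degrees above $N$ are discarded consistently on both sides. Theorem~\ref{thm:chen-all} also guarantees that each truncated signature lies in $G_N(V)$, so the map lands in $\Sigma_{sb,N}$.

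Finally, the endpoint map $p(R,S_s,S_b)=R$ satisfies $p\circ\Pi=\gamma(T)$, which is endpoint retention. Dropping a coordinate amounts to composing with the quotient homomorphism that forgets one $G_N(V)$ factor, and this gives the single-frame synopses. The only delicate step is the spatial transport: checking that $\alpha_{R_1}$ factors out of the iterated integral. Since the paths are piecewise-$C^1$, the angular velocities are piecewise continuous and Fubini applies without difficulty.
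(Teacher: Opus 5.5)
Your proposal is correct and follows essentially the same route as the paper. It builds the group structure from Theorem~\ref{thm:chen-all} and the automorphism property of $\alpha$, with identity $(I,1,1)$ and inverse $(R^{-1},\alpha_{R^{-1}}(S_s^{-1}),S_b^{-1})$, and then obtains the product via Chen's identity plus multilinearity, since the body signal is unchanged and the spatial signal is rotated by $R_1$ on the second segment. You only make explicit a few points the paper cites or leaves implicit: the simplex-splitting proof of Chen's identity, the image of the zero-duration path, and the fact that the truncated signatures lie in $G_N(V)$.
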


\begin{proof}
 Associativity follows from Theorem~\ref{thm:chen-all} and the automorphism property of $\alpha$. The combined identity is $(I,1,1)$ and
\[
(R,S_s,S_b)^{-1}=(R^{-1},\alpha_{R^{-1}}(S_s^{-1}),S_b^{-1}).
\]
These operations are smooth. On the second segment of based rotational concatenation, the body signal is unchanged and the spatial signal is multiplied by $R_1$. Chen's identity and the multilinearity of each iterated integral therefore give \eqref{eq:hybrid-all-product}. Projection onto $R$ returns the exact terminal rotation.  The theorem constructs the target groups; Proposition~\ref{prop:endpoint-adds} below shows that every element of each target is realized by a piecewise-$C^1$ based rotational path.
\end{proof}

\begin{proposition}[Endpoint independence at every finite order]\label{prop:endpoint-adds}
Fix $N\ge 1$. Let $\Pi_{b,N},\Pi_{s,N},\Pi_{sb,N}$ be the synopsis maps of
Theorem~\ref{thm:endpoint-all} on $\chg{\mathcal P}_e(\mathrm{SO}(3))$, with targets
$\Sigma_{b,N}=\mathrm{SO}(3)\times G_N(V)$,
$\Sigma_{s,N}=\mathrm{SO}(3)\ltimes_\alpha G_N(V)$ and
$\Sigma_{sb,N}=(\mathrm{SO}(3)\ltimes_\alpha G_N(V))\times G_N(V)$. Then:
\begin{enumerate}
\item[(i)] Each of $\Pi_{b,N},\Pi_{s,N},\Pi_{sb,N}$ is surjective. Piecewise-constant
body angular velocities of any prescribed total duration $T>0$ suffice. No bound on the angular-velocity magnitude is imposed.
\item[(ii)] Consequently, for every rotation $R\in\mathrm{SO}(3)$ and every value of
the signature coordinates (of $S_b^{(N)}$, of $S_s^{(N)}$, or of the pair
$(S_s^{(N)},S_b^{(N)})$), some path has endpoint $R$ and those signature values.
In particular, $R(T)$ is not determined by the body signature, the spatial
signature, or both together, truncated at any finite degree $N$.
\end{enumerate}
\end{proposition}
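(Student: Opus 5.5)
The plan is to recast each synopsis map as the reachable-set map of a driftless, left-invariant control system on the target group, and then invoke the classical orbit/Chow--Rashevskii argument. It suffices to treat $\Sigma_{sb,N}$. The maps $\Pi_{b,N}$ and $\Pi_{s,N}$ are obtained from $\Pi_{sb,N}$ by composing with the projections $\Sigma_{sb,N}\to\Sigma_{b,N}$ and $\Sigma_{sb,N}\to\Sigma_{s,N}$, which are surjective homomorphisms, so their surjectivity follows.

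\textbf{Step 1: the synopsis of a constant segment.} Take a segment of duration $h$ with constant body angular velocity $u\in\mathbb R^3$. Then $R(t)=\exp(tu^\wedge)$, the body signature is $\exp_\odot(hu)$, and the spatial signature is also $\exp_\odot(hu)$, because $\omega_s=R\,\omega_b=e^{tu^\wedge}u=u$. Hence the segment maps to
\[
g(h,u)=\bigl(e^{hu^\wedge},\exp_\odot(hu),\exp_\odot(hu)\bigr)=\exp\bigl(h\,\xi(u)\bigr),
\]
where $\xi(u)=(u^\wedge,u,u)$ lies in the Lie algebra $\mathfrak s=(\mathfrak{so}(3)\ltimes\mathfrak f_N(V))\oplus\mathfrak f_N(V)$ and $\xi$ is linear in $u$. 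By Theorem~\ref{thm:endpoint-all}, a piecewise-constant path therefore maps to a finite product $\prod_i\exp(h_i\xi(u_i))$.

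\textbf{Step 2: the image is all of $\Sigma_{sb,N}$.} The set $\mathcal R$ of such finite products is closed under multiplication, and it is closed under inversion because $\exp(h\xi(u))^{-1}=\exp(h\xi(-u))$. So $\mathcal R$ is a subgroup. If the Lie algebra $\mathfrak h$ generated by $\{\xi(u)\}$ equals $\mathfrak s$, the standard orbit theorem (equivalently, the fact that a subgroup generated by one-parameter subgroups whose generators Lie-generate $\mathfrak s$ contains an identity neighbourhood) shows that $\mathcal R$ is open. An open subgroup is also closed, and $\Sigma_{sb,N}$ is connected: $\mathrm{SO}(3)$ is connected and $G_N(V)=\exp_\odot(\mathfrak f_N(V))$ is the image of a vector space. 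Hence $\mathcal R=\Sigma_{sb,N}$. To fix the total duration $T$, note that $\exp(h\xi(u))=\exp((h/c)\xi(cu))$ for every $c>0$. Since no magnitude bound is imposed, the durations can be rescaled to sum to any prescribed $T>0$. Part (ii) is then immediate: every pair $(R,(S_s,S_b))$ is attained, so $R$ varies freely over the fibre of fixed truncated signatures.

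\textbf{Step 3: the main obstacle, $\mathfrak h=\mathfrak s$.} Using the semidirect bracket $[(X,a),(Y,b)]=([X,Y],X\cdot b-Y\cdot a+[a,b])$, where $u^\wedge$ acts on tensors as the derivation induced by $u\times$, one computes
\[
[\xi(u),\xi(v)]=\bigl((u\times v)^\wedge,\;2\,u\times v+[u,v],\;[u,v]\bigr).
\]
Consequently
\[
\eta(u,v):=[\xi(u),\xi(v)]-\xi(u\times v)=\bigl(0,\;u\times v+[u,v],\;[u,v]-u\times v\bigr)\in\mathfrak h.
\]
This element has no rotational part, and its two degree-one parts differ in sign. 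I would argue by the descending filtration by tensor degree on $0\oplus\mathfrak f_N\oplus\mathfrak f_N$, proving by induction on $k$ that $\mathfrak h$ contains every element whose lowest nonzero degree is $k$, modulo higher degrees.

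For the degree-one step, bracket $\eta(u,v)$ with $\xi(w)$. In degree one this acts by $w\times$ on the spatial part and trivially on the body part, while the pure degree-one brackets raise degree. Because $\mathfrak{so}(3)$ acts irreducibly on $\mathbb R^3$, combining these brackets with $\eta(u,v)$ should yield $(0,a,0)$ and $(0,0,a)$ independently, modulo degree $\ge2$. Then $\xi(u)$ minus these gives $(u^\wedge,0,0)$ modulo degree $\ge2$.

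For the step from degree $k$ to degree $k+1$, bracket degree-$k$ elements with the degree-one elements already obtained. This spans degree $k+1$ of each free factor separately, since $\mathfrak f_N$ is generated in degree one.

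The delicate points are keeping track of the lower-order error terms produced by the derivation action and checking that the spatial and body copies genuinely separate at degree one. If the direct computation there becomes messy, my fallback is a Goursat-type argument. Here $\mathfrak h$ projects onto each factor. The kernel of the projection of $\mathfrak h$ to $\mathfrak{so}(3)\oplus\mathfrak f_N$ is an ideal of the image. It is nonzero because it contains the body degree-two part of a suitable bracket. Finally, $\mathfrak f_N$ admits no nonzero ideal invariant under the surjection that misses degree one. Together these should force the kernel to be the whole body factor.
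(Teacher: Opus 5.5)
Your argument is correct, but it establishes Lie-algebra generation by a different route from the paper.

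\textbf{The paper's route.} The paper treats the three targets separately. The body and hybrid cases follow from a Goursat-type lemma: a subalgebra of $\mathfrak a\oplus\mathfrak b$, with $\mathfrak a$ perfect and $\mathfrak b$ nilpotent, that projects onto both factors is everything. This makes the body case immediate. The spatial case, and the projection onto $\mathfrak g_s$ needed for the hybrid case, come from an initial-form argument built on $[Z_u,Z_v]-Z_{u\times v}=Y_{u\times v}+[Y_u,Y_v]$.

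\textbf{Your route.} You pass to the hybrid group once and recover the body and spatial maps through the surjective homomorphisms $\Sigma_{sb,N}\to\Sigma_{b,N}$ and $\Sigma_{sb,N}\to\Sigma_{s,N}$. You then run a single filtration argument directly in $\mathfrak s$.

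\textbf{Degree-one separation.} The step you hedge with ``should'' does go through. Here $w^\wedge$ acts by the derivation only on the spatial factor, while the body bracket $[w,\cdot]$ raises degree. Hence $[\xi(w),\eta(u,v)]\equiv(0,\,w\times(u\times v),\,0)$ modulo degree $\ge2$. These vectors span $V$. Then $\eta(u,v)\equiv(0,c,-c)$ supplies $(0,0,c)$ up to sign.

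\textbf{The induction.} Every element used afterwards has zero $\mathfrak{so}(3)$ component, so no derivation terms enter your induction. The conclusion follows from the count $\dim\operatorname{gr}\mathfrak h=\dim\mathfrak h$, exactly as in the paper's Lemma A; you should state this final step explicitly.

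\textbf{Comparison.} Your route buys uniformity: one explicit computation and no perfectness lemma. The paper's route buys structure. It shows why the spatial and body copies of $\mathfrak f_N$ cannot remain coupled, since a perfect algebra has no nonzero nilpotent quotient. It also gets the body case essentially for free.

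\textbf{Your fallback.} As stated, the fallback would not work. It takes the kernel on the nilpotent (body) side, where $\bigoplus_{k\ge2}\mathfrak f_N^{(k)}$ is already a nonzero ideal missing degree one. The working version takes $\mathfrak n=\{X:(X,0)\in\mathfrak h\}\subset\mathfrak g_s$. The quotient $\mathfrak g_s/\mathfrak n$ is perfect and isomorphic to a quotient of the nilpotent $\mathfrak f_N$, hence zero. That is precisely the paper's (F2), and your main line does not need it.
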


\begin{proof}
\emph{Step 0: Lie algebras and generators.}
Write $\mathfrak f_N=\bigoplus_{k=1}^N\mathfrak f_N^{(k)}$ (graded, $\mathfrak f_N^{(1)}=V$),
and identify $\mathfrak{so}(3)\cong V$ via $\wedge$. For $a\in V$ let $\rho(a)$ be
the derivation of $\mathfrak f_N$ with $\rho(a)v=a\times v$ on $V$; it is the
differential of $\alpha_{\exp(sa^\wedge)}$ and preserves degree. \chg{Set
$\mathfrak g_s=\mathfrak{so}(3)\ltimes_\rho\mathfrak f_N$}, with elements $(a,u)$ and
\[
[(a,u),(a',u')]=\bigl(a\times a',\ \rho(a)u'-\rho(a')u+[u,u']\bigr).
\]
The Lie algebras of $\Sigma_{b,N},\Sigma_{s,N},\Sigma_{sb,N}$ are
$\mathfrak g_b=\mathfrak{so}(3)\oplus\mathfrak f_N$, $\chg{\mathfrak g_s}$ and
$\mathfrak g_{sb}=\chg{\mathfrak g_s}\oplus\mathfrak f_N$. For $v\in V$ put
\[
Z_v=(v,v)\in\mathfrak g_b,\qquad Z_v=(v,v)\in\mathfrak g_s,\qquad Z_v=(v,v,v)\in\mathfrak g_{sb}.
\]
A segment with constant body rate $v$ and duration $\tau$ has $R(t)=\exp(tv^\wedge)$.
Since $\exp(tv^\wedge)v=v$, its spatial rate is also $v$. Hence its synopsis is
$\bigl(\exp(\tau v^\wedge),\exp_\odot(\tau v),\exp_\odot(\tau v)\bigr)$ (with the
coordinates appropriate to each group), which is $\exp(\tau Z_v)$.

\emph{Step 1: Reduction to Lie-algebra generation.}
Let $\mathfrak h$ be the Lie algebra generated by $\{Z_v:v\in V\}$. Because $\Pi$ is a
monoid homomorphism and $Z_{-v}=-Z_v$, the synopses of piecewise-constant
paths are exactly the finite products of one-parameter subgroups $\exp(sZ_v)$,
$s\in\mathbb R$.  \chg{By the orbit theorem~\citep{Sussmann1973}, applied to the symmetric family of left-invariant vector fields generated by the $Z_v$ (see also \citealp[Definition~2.3 and Lemma~2.5]{SussmannJurdjevic1972}), this set is the connected immersed Lie subgroup with Lie algebra $\mathfrak h$.} Each $\Sigma$ is connected, since
$\mathrm{SO}(3)$ is connected and $G_N\cong\mathfrak f_N$ as a manifold. \chg{So (i) holds as soon as $\mathfrak h$ is the full Lie algebra: the family is then bracket generating on the connected group $\Sigma$, and the Chow--Rashevskii theorem shows that every point of $\Sigma$ is reachable. This is the only direction used below.} The synopsis of a segment does
not depend on how increments are split between rate and duration, since
$\exp(\tau Z_v)=\exp(Z_{\tau v})$. Hence the total duration can be fixed to any $T>0$.

\emph{Step 2: Two algebraic facts.}
(F1) $\mathfrak{so}(3)$ and $\chg{\mathfrak g_s}$ are perfect ($[\mathfrak g,\mathfrak g]=\mathfrak g$),
and $\mathfrak f_N$ is nilpotent. For $\chg{\mathfrak g_s}$: the brackets $[(a,0),(a',0)]$
span $\mathfrak{so}(3)$; $[(a,0),(0,v)]=(0,a\times v)$ spans $V$; and
$[(0,u),(0,u')]$ spans $[\mathfrak f_N,\mathfrak f_N]=\bigoplus_{k\ge2}\mathfrak f_N^{(k)}$,
because $V$ generates $\mathfrak f_N$.

(F2) Let $\mathfrak a$ be perfect, $\mathfrak b$ nilpotent, and $\mathfrak h\subset\mathfrak a\oplus\mathfrak b$
a subalgebra with both projections onto. Then $\mathfrak h=\mathfrak a\oplus\mathfrak b$.
Indeed, $\mathfrak n=\{x:(x,0)\in\mathfrak h\}$ is an ideal of $\mathfrak a$, because
$[(x',y'),(x,0)]=([x',x],0)$ for any $(x',y')\in\mathfrak h$. The map
$\mathfrak h\to\mathfrak a/\mathfrak n$, $(x,y)\mapsto x+\mathfrak n$, is onto and kills
$\ker\mathrm{pr}_2=\mathfrak n\oplus0$, so it factors through
$\mathfrak h/\ker\mathrm{pr}_2\cong\mathfrak b$. Thus $\mathfrak a/\mathfrak n$ is perfect and
nilpotent, hence $0$. So $\mathfrak a\oplus0\subset\mathfrak h$, and since $\mathrm{pr}_2$
is onto, $\mathfrak h=\mathfrak a\oplus\mathfrak b$.

\emph{Step 3: Lemma A.} The subalgebra $\chg{\mathfrak h_s}\subset\chg{\mathfrak g_s}$ generated by
$\chg{Z_v}=(v,v)$ is all of $\chg{\mathfrak g_s}$.
Grade $\chg{\mathfrak g_s}$ by $\chg{\mathfrak g_s^{(0)}}=\mathfrak{so}(3)$ and $\chg{\mathfrak g_s^{(k)}}=\mathfrak f_N^{(k)}$,
$k\ge1$; brackets add degrees. Write $\chg{X_v}=(v,0)$ and $\chg{Y_v}=(0,v)$, so $\chg{Z_v=X_v+Y_v}$,
with $\chg{[X_a,X_b]=X_{a\times b}}$, $\chg{[X_a,Y_v]=Y_{a\times v}}$, and
$\chg{[Y_u,Y_v]}=(0,[u,v])$. Then
\[
\chg{[Z_u,Z_v]-Z_{u\times v}=Y_{u\times v}+[Y_u,Y_v]}\in\chg{\mathfrak h_s} .
\]
 For $0\ne x\in\chg{\mathfrak h_s}$, let $\operatorname{in}(x)$ denote its lowest-degree nonzero homogeneous component, and set
\[
\operatorname{gr}\chg{\mathfrak h_s}
=\operatorname{span}\{\operatorname{in}(x):0\ne x\in\chg{\mathfrak h_s}\}.
\]
The decreasing degree filtration induces
$F^k\chg{\mathfrak h_s}=\chg{\mathfrak h_s}\cap\bigoplus_{j=k}^N\chg{\mathfrak g_s^{(j)}}$
for $0\le k\le N+1$, with $F^{N+1}\chg{\mathfrak h_s}=0$.
Taking degree-$k$ components identifies $F^k\chg{\mathfrak h_s}/F^{k+1}\chg{\mathfrak h_s}$ with the degree-$k$ part of $\operatorname{gr}\chg{\mathfrak h_s}$; hence
$\dim\operatorname{gr}\chg{\mathfrak h_s}=\dim\chg{\mathfrak h_s}$.
It is a graded subalgebra because, whenever $[\operatorname{in}(x),\operatorname{in}(y)]\ne0$, this bracket equals $\operatorname{in}([x,y])$. Now
 $\operatorname{in}(\chg{Z_v})=\chg{X_v}$ for $v\ne0$ and,
for $u\times v\ne0$,
$\operatorname{in}\bigl(\chg{Y_{u\times v}+[Y_u,Y_v]}\bigr)=\chg{Y_{u\times v}}$.
Since cross products range over all of $V$, these initial components span all $\chg{X_a}$ and $\chg{Y_c}$.
The subalgebra generated by all $\chg{X_a,Y_c}$ is $\chg{\mathfrak g_s}$, so
$\dim\chg{\mathfrak h_s}=\dim\chg{\mathfrak g_s}$ and $\chg{\mathfrak h_s=\mathfrak g_s}$.

\emph{Step 4: Conclusion.}
Body: $\mathfrak h\subset\mathfrak{so}(3)\oplus\mathfrak f_N$ projects onto $\mathfrak{so}(3)$ (the $v^\wedge$ span it)
and onto $\mathfrak f_N$ ($V$ generates it); apply (F2) with $\mathfrak a=\mathfrak{so}(3)$.
Spatial: $\mathfrak h=\chg{\mathfrak h_s=\mathfrak g_s}$ by Lemma A.
Hybrid: $\mathfrak h\subset\chg{\mathfrak g_s}\oplus\mathfrak f_N$ projects onto $\chg{\mathfrak g_s}$ (Lemma A) and onto
$\mathfrak f_N$; apply (F2) with $\mathfrak a=\chg{\mathfrak g_s}$. This proves (i).
For (ii), fix a target $(R,\sigma)$; (i) supplies a path with that synopsis. Comparing
with the constant path, whose synopsis is the identity, shows that $R\ne I$ is compatible
with trivial signatures.
\end{proof}

\begin{remark}[Explicit witnesses; Magnus groups]
 For the body and spatial cases, explicit witnesses exist. Let $e_1,e_2,e_3$ be the standard coordinate vectors and use the group-commutator convention $ghg^{-1}h^{-1}$.
Let $g_i$ be the body synopsis of a constant-rate segment with increment $te_i$, and fix the nesting by
\[
\chg{W_1}=g_2,\qquad \chg{W_{k+1}}=g_1\chg{W_k}g_1^{-1}\chg{W_k}^{-1}\quad(1\le k\le N).
\]
The word $\chg{W_{N+1}}$ is realized by concatenating these segments; inverses are realized by reversing the segment order and negating the body rates.
It has \emph{exactly} trivial body signature in $G_N$, because all group commutators of length $N+1$ vanish in a step-$N$ nilpotent group. Its endpoint satisfies
 $\log R_t=t^{N+1}\operatorname{ad}^N_{e_1^\wedge}(e_2^\wedge)+O(t^{N+2})$.
Here $\operatorname{ad}^N$ denotes $N$ repeated brackets, and the remainder has norm bounded by a constant times $|t|^{N+2}$ near zero.
The leading coefficient is nonzero, since repeated cross products with $e_1$ alternate, up to sign, between $e_2$ and $e_3$. Thus $R_t\ne I$ for sufficiently small nonzero $t$.  The pointwise inverse path $s\mapsto R_t(s)^{-1}$, where $R_t(s)$ is the constructed path and $t$ is its increment parameter, has $\omega_s=-\omega_b$ of the original path, hence
an exactly trivial spatial signature and endpoint $R_t^{-1}\ne I$. The same commutator
does \emph{not} trivialize the spatial signature: for $N=1$ it gives
$x_s=2t^2e_3+O(t^3)$. This is why the hybrid case needs the argument above.
Since $M_N$ (and its spatial and hybrid versions) are quotients of the Chen groups
(Section~\ref{sec:third-distinction}), surjectivity descends, so the same conclusions hold
for the evaluated Magnus synopses.
\end{remark}

The retained rotation is therefore exact information in addition to the finitely truncated signature. Pairing it with the truncated signature is a direct/semidirect-product construction from standard homomorphisms; the preceding proposition identifies the information preserved by this choice of rotational descriptor.

Note that for piecewise-$C^1$ (bounded-variation) paths, $R(T)$ \emph{is} determined by the \emph{full} signature, since the signature fixes the path up to tree-like equivalence~\citep{HamblyLyons2010} and the development is invariant under it. Proposition~\ref{prop:endpoint-adds} therefore concerns finite truncations only, which do not
determine $R(T)$, and there is no need to do so since $R(T)$ is carried along with the finite truncation in this formulation.

\subsection{Evaluated Magnus coefficients as a graded group}

Evaluation replaces formal vector brackets by matrix commutators in $\mathfrak{so}(3)$, equivalently by cross products after applying the inverse hat map $\vee$. The formal symbol $\varepsilon$ below is a degree label, not a physical small parameter: $\varepsilon^{N+1}=0$ instructs us to discard degrees above $N$. Thus
\[
 L=\sum_{k=1}^N\varepsilon^k a_k^\wedge
\]
is a tuple of $N$ independent three-vectors $a_k$, with each degree kept separately. In the bracket formula below, $A,B$ denote skew-symmetric $3\times3$ matrices, not the tensor tuples used in \chg{Section~\ref{sec:trunc-chen}}. An algebra is \emph{graded} here because each component has a degree and brackets add degrees.

Retaining evaluated Magnus terms means retaining each homogeneous coefficient separately. Introduce a formal degree variable $\varepsilon$, with $\varepsilon^{N+1}=0$, and the Lie algebra
\begin{equation}\label{eq:graded-magnus-algebra}
\mathfrak m_N=\bigoplus_{k=1}^N\varepsilon^k\mathfrak{so}(3),\qquad
[\varepsilon^i A,\varepsilon^j B]=
\begin{cases}
\varepsilon^{i+j}[A,B],&i+j\leq N,\\
0,&i+j>N.
\end{cases}
\end{equation}
The degree tag is essential: the copies of $\mathfrak{so}(3)$ in different degrees are distinct vector spaces.

The notation $\mathfrak m_N$ refers to this vector space with its Lie bracket; $M_N$ refers to the same coordinate space equipped with the BCH group multiplication. The coefficient $\Xi_{k,b}(T)$ is the degree-$k$ body Magnus term, obtained by scaling the body angular-velocity matrix by a scalar and extracting the term homogeneous of degree $k$ in that scalar from the logarithm of the resulting development.

\begin{theorem}[Evaluated Magnus groups at arbitrary order]\label{thm:magnus-all}
The $3N$-dimensional space $\mathfrak m_N$ is a nilpotent Lie algebra. With multiplication
\[
L\circ_N K=\operatorname{BCH}_{\leq N}(L,K),
\]
where terms of total $\varepsilon$-degree greater than $N$ are discarded, it is a Lie group $M_N$ with identity $0$ and inverse $-L$. The body map
\[
R(\cdot)\longmapsto\left(R(T),\ \sum_{k=1}^N\varepsilon^k\Xi_{k,b}(T)\right)
\]
is an endpoint-retaining synopsis into $\SO3\times M_N$.
\end{theorem}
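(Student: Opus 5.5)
The plan is to obtain $M_N$ as a quotient of the Chen group $G_N(V)$ of Theorem~\ref{thm:chen-all}, so that every group axiom is inherited instead of verified by hand, and then to identify the image of a body signature with the tuple of evaluated Magnus coefficients.

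\textbf{Algebra and group.} First, $\mathfrak m_N$ is a Lie algebra. Antisymmetry is immediate from \eqref{eq:graded-magnus-algebra}. The Jacobi identity holds degreewise: it either reduces to the Jacobi identity of $\mathfrak{so}(3)$ or all three terms vanish together, because each term has total degree $i+j+k$. The algebra is nilpotent because brackets add degrees, so every $(N+1)$-fold bracket has degree above $N$ and is zero. It is $3N$-dimensional by construction.

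Next, define the graded linear map $\phi:V\to\mathfrak m_N$ by $v\mapsto\varepsilon v^\wedge$. By the universal property of the free step-$N$ nilpotent Lie algebra, $\phi$ extends to a Lie homomorphism $\Phi:\mathfrak f_N(V)\to\mathfrak m_N$. This homomorphism sends formal brackets to matrix commutators tagged by degree. It is surjective because $\mathfrak{so}(3)$ is perfect: every element of $\mathfrak{so}(3)$ is a cross product, so each degree-$k$ copy is reached by brackets of degree-one elements.

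Since $\operatorname{BCH}_{\le N}$ is a universal Lie polynomial, $\Phi(\operatorname{BCH}_{\le N}(L,K))=\operatorname{BCH}_{\le N}(\Phi L,\Phi K)$. Through $\log_\odot$, the exact group law on $G_N(V)$ is $\operatorname{BCH}_{\le N}$, so $\Phi\circ\log_\odot$ is a surjective homomorphism from $G_N(V)$ onto $(\mathfrak m_N,\circ_N)$. Associativity of $\circ_N$ then follows by pushing forward triples through this surjection. Identity $0$ and inverse $-L$ follow because $\operatorname{BCH}(L,-L)=0$. Smoothness holds because $\circ_N$ is polynomial. The kernel is $\exp_\odot(\ker\Phi)$; this identifies $M_N$ as a quotient, which the later remark uses.

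\textbf{Identification with the Magnus coefficients.} This is the step I expect to be the main obstacle, because it requires convention bookkeeping: chronological order and the body sign. The claim is
\[
\Phi\bigl(\log_\odot S_b^{(N)}(\gamma)\bigr)=\sum_{k=1}^N\varepsilon^k\,\Xi_{k,b}(T).
\]
I would first prove it for a single constant-rate segment. There $S_b^{(N)}=\exp_\odot(\tau v)$, and the Magnus series has only the first term $\tau v^\wedge$, so both sides equal $\varepsilon\tau v^\wedge$.

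For general paths I would use the formal-scaling definition. Replace $\Omega_b$ by $\lambda\Omega_b$. The degree-$k$ part of $\log$ of the development is homogeneous of degree $k$ in $\lambda$, and it is the image under evaluation of the degree-$k$ part of $\log S_b$. This is the classical statement that the Magnus expansion is the evaluated log-signature: the development of $\dot R=R\Omega_b$ in the tensor algebra is $S_b$ with the earliest time on the left. I would check that the body convention $\dot R=R\Omega$ matches the left-to-right tensor order, which it does by Chen's identity for body signatures. As a sanity check, I would compare degree two with \eqref{eq:body-magnus-second} via $B=-\tfrac12(A-A^T)$; the sign there must be reconciled with the chosen orientation of $\wedge$ on tensors.

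Alternatively, one can avoid the general identity. Both sides are multiplicative: the left by Chen's identity and the homomorphism property, the right by the BCH composition of Magnus logarithms degreewise. Both sides are also continuous in $L^1$. They agree on constant segments, so they agree on piecewise-constant paths and then on all paths by the approximation already used in Theorem~\ref{thm:chen-all}.

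\textbf{Synopsis property.} With this identification, the body map is the composite of the homomorphism $\Pi_{b,N}$ of Theorem~\ref{thm:endpoint-all} with $\mathrm{id}\times(\Phi\circ\log_\odot)$. It is therefore a monoid homomorphism into $\SO3\times M_N$ that sends the zero-duration path $\iota$ to $(I,0)$. Projection onto $\SO3$ returns $R(T)$, which gives endpoint retention.
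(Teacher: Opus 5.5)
Your proposal is correct, but it is organized differently from the paper's proof. The paper does not pass through the Chen group here. It writes down the explicit faithful nilpotent realization $X_N(L)=\sum_k J_N^k\otimes a_k^\wedge$, $\mathcal E_N(L)=\exp X_N(L)$, and reads associativity, the identity, and inverses of $\circ_N$ off matrix multiplication via $\mathcal E_N(L)\mathcal E_N(K)=\mathcal E_N(L\circ_N K)$. It then identifies the coefficients directly by solving $\dot U_\varepsilon=U_\varepsilon\varepsilon\Omega_b$ modulo $\varepsilon^{N+1}$ and using $U_{\varepsilon,12}=U_{\varepsilon,1}U_{\varepsilon,2}$.

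You instead realize $(\mathfrak m_N,\circ_N)$ as the image of $G_N(V)$ under $\Phi\circ\log_\odot$; your $\Phi$ is exactly the paper's later $q_N$. You pull associativity back through that surjection, and you obtain the synopsis property by composing $\Pi_{b,N}$ from Theorem~\ref{thm:endpoint-all} with this homomorphism.

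Your route gives the quotient $G_N(V)\to M_N$, with kernel $\exp_\odot(\ker\Phi)$, in the same stroke. The paper establishes that separately in Section~\ref{sec:third-distinction} before transferring surjectivity to the Magnus synopses. The paper's route, in turn, is independent of Theorems~\ref{thm:chen-all} and~\ref{thm:endpoint-all} and of the freeness of $\mathfrak f_N(V)$. It also yields concrete matrices that the paper reuses for the spatial and hybrid groups.

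You can drop the appeal to the universal property. Restrict the associative homomorphism $T^{\le N}(V)\to\mathbb R^{3\times3}[\varepsilon]/(\varepsilon^{N+1})$, $v\mapsto\varepsilon v^\wedge$, to Lie elements. Picard iteration shows it sends $S_b^{(N)}$ to $U_\varepsilon$, with the earliest time on the left, matching $\dot R=R\Omega_b$. Since it commutes with the finite $\log$, your identification $\Phi(\log_\odot S_b^{(N)})=\sum_k\varepsilon^k\Xi_{k,b}$ follows at once.

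Your degree-two sign worry also resolves. As a matrix, $\tfrac12(A-A^T)=-b^\wedge$, whereas as a formal tensor it evaluates under $\Phi$ to $+\varepsilon^2 b^\wedge$.

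Finally, your alternative ``multiplicativity plus $L^1$ density'' argument is not independent. It presupposes that $\sum_k\varepsilon^k\Xi_{k,b}$ composes by $\circ_N$ under concatenation, which is precisely what the paper's formal-ODE step proves. Keep the formal-scaling route.
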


\begin{proof}
The bracket and Jacobi identity are inherited from the polynomial matrix Lie algebra after quotienting by terms of degree greater than $N$. Brackets increase degree, so every iterated bracket of length $N+1$ vanishes.

For an explicit faithful realization, let $J_N$ be the $(N+1)\times(N+1)$ matrix with ones on the first superdiagonal and zeros elsewhere.
Thus $J_N^{N+1}=0$. In the next display, $\otimes$ denotes the matrix Kronecker product: each entry of the first matrix multiplies a copy of the second matrix. The resulting $X_N(L)$ has size $3(N+1)\times3(N+1)$. A representation is \emph{faithful} if distinct group elements have distinct representing matrices; injectivity means the corresponding map loses no coordinates.
\par
 For $L=\sum_{k=1}^N\varepsilon^k a_k^\wedge$, set
\begin{equation}\label{eq:all-magnus-matrix}
X_N(L)=\sum_{k=1}^N J_N^k\otimes a_k^\wedge,\qquad
\mathcal E_N(L)=\exp X_N(L)=\sum_{j=0}^N\frac{X_N(L)^j}{j!}.
\end{equation}
The map $X_N$ is injective and preserves brackets. Its matrices are nilpotent, and the finite matrix logarithm is inverse to the displayed exponential. The BCH identity gives
\[
\mathcal E_N(L)\mathcal E_N(K)=\mathcal E_N(L\circ_N K).
\]
Associativity, the identity, and inverses now follow from matrix multiplication; the group operations are polynomial.

To identify the coefficients with the path descriptor, solve the formal equation $\dot U_\varepsilon=U_\varepsilon\varepsilon\Omega_b$, $U_\varepsilon(0)=I$, modulo $\varepsilon^{N+1}$. Its logarithm is exactly $\sum_{k=1}^N\varepsilon^k\Xi_{k,b}$. Concatenation gives $U_{\varepsilon,12}=U_{\varepsilon,1}U_{\varepsilon,2}$, so its logarithms compose by the displayed BCH product. The separate physical endpoint multiplies as $R_1R_2$. This formal argument involves finite polynomials and requires no Magnus convergence hypothesis.
\end{proof}

For example, in third-order body coordinates $L\leftrightarrow(x,b,c)$, the composition law is given by~\eqref{eq:magnus-third-product} below.
Specifically $L=\varepsilon x^\wedge+\varepsilon^2 b^\wedge+\varepsilon^3 c^\wedge$, with $x,b,c\in\mathbb R^3$. These are separate homogeneous log coefficients, not the raw second- and third-level signature tensors $A,C$. Subscripts $1,2,12$ again denote the first segment, second segment, and their concatenation.

\begin{equation}\label{eq:magnus-third-product}
\begin{aligned}
x_{12}&=x_1+x_2,\\
b_{12}&=b_1+b_2+\tfrac12x_1\times x_2,\\
c_{12}&=c_1+c_2+\tfrac12(x_1\times b_2+b_1\times x_2)\\
&\quad+\tfrac1{12}\bigl(x_1\times(x_1\times x_2)+x_2\times(x_2\times x_1)\bigr).
\end{aligned}
\end{equation}
This is an exactly associative group law on three retained vectors. Merely summing those vectors in $\mathfrak{so}(3)$ and applying a finite ordinary BCH approximation does not establish an associative product.

For spatial coordinates use the chronological convention
\begin{equation}\label{eq:spatial-magnus-all-sign}
a_{k,s}=(-1)^{k-1}\Xi_{k,s}^{\vee},\qquad
L_s=\sum_{k=1}^N\varepsilon^k a_{k,s}^{\wedge}.
\end{equation}
Indeed, the spatial differential equation reverses the chronological order of matrix words.
In particular, $a_{1,s}=\Xi_{1,s}^{\vee}$, $a_{2,s}=-\Xi_{2,s}^{\vee}$, and $a_{3,s}=\Xi_{3,s}^{\vee}$. This sign convention lets the stored spatial coefficients use the same chronological BCH product as the body coefficients. A matrix word is an ordered product of matrices; word reversal reverses that order.
\par
 Word reversal acts on a homogeneous Lie polynomial of degree $k$ by $(-1)^{k-1}$; this follows inductively because reversal changes a commutator into the negative commutator of the reversed words. Thus \eqref{eq:spatial-magnus-all-sign} extends the second-order sign convention already used above.

The action $\alpha_R L=\sum_k\varepsilon^k(Ra_k)^\wedge$ is an automorphism of $M_N$. Consequently, the spatial and combined evaluated Magnus groups are $\SO3\ltimes M_N$ and $(\SO3\ltimes M_N)\times M_N$, with the product in \eqref{eq:hybrid-all-product} interpreted using $\circ_N$ and $L_s,L_b$ in place of $\odot$ and $S_s,S_b$. Explicit matrices are
\[
\operatorname{diag}(R,\mathcal E_N(L_b)),\qquad
\mathcal E_N(L_s)\bigl(I_{N+1}\otimes R\bigr)
\]
for the body and spatial versions, respectively. Conjugation by $I_{N+1}\otimes R$ realizes $\alpha_R$; a block-diagonal combination with $\mathcal E_N(L_b)$ realizes the hybrid version. These uniform representations need not have minimal matrix size; the smaller step-two representations above remain useful.

\subsection{The third-order Chen--Magnus distinction}\label{sec:third-distinction}
A Lie-algebra homomorphism is a linear map preserving brackets. Write $E_i$ for the formal degree-one generator corresponding to $e_i$. Here $q_N$ replaces each formal generator by its physical skew matrix and keeps the degree tag; for example,
\[
 q_N([E_i,E_j])=\varepsilon^2[e_i^\wedge,e_j^\wedge]
 =\varepsilon^2(e_i\times e_j)^\wedge.
\]
In the present context, surjectivity means that every evaluated coefficient tuple has a preimage. The kernel consists of free-Lie expressions mapped to zero. A graded ideal is a degree-respecting subspace closed under taking brackets with any element of the algebra. Quotienting identifies elements differing by kernel information. The homogeneous dimension at degree $k$ counts independent log-signature coordinates involving exactly $k$ generators; it does not count all raw tensor entries.
\par

The full log signature is a free-Lie object before its brackets are evaluated in $\mathfrak{so}(3)$. Evaluation with degree retained defines the surjective Lie-algebra homomorphism
\[
q_N:\mathfrak f_N(\mathbb R^3)\longrightarrow\mathfrak m_N,
\qquad E_i\longmapsto\varepsilon e_i^\wedge.
\]
Surjectivity follows degree by degree from $[\mathfrak{so}(3),\mathfrak{so}(3)]=\mathfrak{so}(3)$. Its kernel is a graded ideal, so it induces the group quotient $G_N(\mathbb R^3)\to M_N$. Hence loss of information under evaluation does not prevent $M_N$ from being a group.

At order two the homogeneous free-Lie dimensions are $3,3$, and evaluation is an isomorphism, as reflected in $A=\tfrac12xx^T-b^\wedge$. At order three they are $3,3,8$, so the full third-order Chen group has dimension $14$, whereas $M_3$ has dimension $9$. The $39$ raw tensor entries $3+9+27$ should not be confused with the $14$ independent group coordinates. For an explicit element of the degree-three kernel, the nonzero free-Lie polynomial
\[
[E_1,[E_1,E_2]]-[E_3,[E_3,E_2]]
\]
evaluates to zero because $e_1\times(e_1\times e_2)=e_3\times(e_3\times e_2)=-e_2$. Thus the full third-order Chen group and the evaluated third-order Magnus group are both valid groups, but they are not equivalent descriptors. Including the exact endpoint gives target dimensions $17$ and $12$, respectively. This distinction qualifies the step-two equivalence and any higher-order reconstruction claim.

\section[Conditional Finite-Modal Reconstruction from Transported Moments]{ Conditional Finite-Modal Reconstruction from Transported Moments}\chg{\label{sec:modal}}

In general, the terms in the truncated Magnus and Chen expansions are insufficient to recover a trajectory in angular velocity, whether body-fixed or space-fixed.  This section establishes conditional finite-modal recovery from additional prescribed test-frame moments. It does not assert that the body or spatial truncations in the preceding sections alone determine those moments.

Suppose that the angular velocity is restricted to a ``modal'' form
\[
\omega_b(t)
=
\sum_{r=1}^{P}
c_r\phi_r(t),
\]
where
$
\phi_r:[0,T]\rightarrow\mathbb{R}^3
$
 are prescribed, linearly independent, piecewise-continuous basis functions on a known fixed interval $[0,T]$, and
$c_r\in\mathbb{R}$
are unknown coefficients. Such a restriction is reasonable when, for example, a motion is being designed or is the result of control actions.
Then the space of trajectories that results from integrating the kinematic equation to produce trajectories $R(t)$ for $t \in [0,T]$ is also $P$-dimensional.

The objective of this section is to construct a family of test trajectories that
 permit recovery of the coefficients \(c_r\) when the rank condition below holds, and hence exact reconstruction of \(\omega_b(t)\) and the rotation $R(t)$ obtained from $\dot R=R\omega_b^\wedge$, $R(0)=I$.

\subsection{Test Trajectories}

 Use one test frame per modal function, so the number of test frames is $m:=P$. For each $i=1,\ldots,m$, define a trajectory
$
g_i:[0,T]\rightarrow \SO3
$
as the solution of
\[
(g_i^{-1}\dot g_i)^\vee
=
\phi_i,
\qquad
g_i(0)=I.
\]

Associated with each trajectory is the adjoint action, represented in vector coordinates by
\[
Q_i(t)
=
\operatorname{Ad}_{g_i(t)}
\in \SO3.
\]
Equivalently, $g_i(t)v^\wedge g_i(t)^{-1}=(Q_i(t)v)^\wedge$ for every $v\in\mathbb R^3$.

We define the measurements
\[
a_i
=
\int_0^T
Q_i(t)
\omega_b(t)\,dt.
\]

These quantities may be regarded as generalized moments of the angular velocity trajectory, and under trajectory concatenation they behave as
\[
a_i(\gamma_1\star\gamma_2)
=a_{i,1}+Q_{i,1}(T_1)a_{i,2}.
\]
This identity assumes that the second test frame is rebased: $Q_{i,12}(T_1+s)=Q_{i,1}(T_1)Q_{i,2}(s)$. A reset copy of a general externally prescribed schedule does not satisfy this condition. For autonomous composition one must retain each terminal frame, with
\[
(Q_{i,1},a_{i,1})(Q_{i,2},a_{i,2})=(Q_{i,1}Q_{i,2},a_{i,1}+Q_{i,1}a_{i,2}).
\]
Consequently finite-dimensional synopsis laws can be constructed for jointly supplied physical and test-frame paths, with elements of the form $(R,(Q_i,a_i)_{i=1}^m)$. A synopsis of the physical path alone additionally requires a frame-assignment rule that itself respects concatenation.

\subsection{Linear Reconstruction}

Substituting
\[
\omega_b(t)
=
\sum_{r=1}^{P}
c_r\phi_r(t)
\]
into the measurement equation yields
\[
a_i
=
\sum_{r=1}^{P}
c_r
\int_0^T
Q_i(t)
\phi_r(t)
\,dt.
\]

Define
\[
M_{ir}
=
\int_0^T
Q_i(t)
\phi_r(t)
\,dt
\in\mathbb{R}^3.
\]

Then
\[
a_i
=
\sum_{r=1}^{P}
M_{ir}c_r.
\]

 Let $a=(a_1^T,\ldots,a_m^T)^T\in\mathbb R^{3m}$, $c=(c_1,\ldots,c_P)^T\in\mathbb R^P$, and let $M\in\mathbb R^{3m\times P}$ be the block matrix with vector blocks $M_{ir}$. Stacking the measurements gives
\[
a
=
Mc.
\]

If
\[
\operatorname{rank}(M)=P
\]
then the coefficients \(c_r\) are uniquely recoverable from the measurements $
a_i$.

Consequently, the trajectory $R(t)$ is uniquely determined by integrating its kinematic equation. This is a fixed-interval linear identifiability statement; it does not guarantee good conditioning or uniqueness for arbitrary basis and frame choices.  In particular, $c=M^\dagger a$, where $M^\dagger$ is the Moore--Penrose pseudoinverse. For fixed $M$ and perturbations only in the measurements, $\|\delta c\|_2\leq\|M^\dagger\|_2\|\delta a\|_2$.
One would attempt to design the set $\{\phi_i(t)\}$ such that
1) a rich set of trajectories in angular velocity results; and 2)
the rank condition is met. But if for some reason the rank is deficient,
the Magnus/Chen hierarchy affords the opportunity to go to higher terms.
For a simple full-rank example, take $T=1$, $P=m=2$, $\phi_1(t)=e_1$, and $\phi_2(t)=e_2$, where $e_1,e_2$ are standard coordinate vectors. The prescribed first test frame is $Q_1(t)=\exp(te_1^\wedge)$, and its two measurement columns are $M_{11}=e_1$ and $M_{12}=\sin(1)e_2+(1-\cos(1))e_3$. These columns are linearly independent, so the full stacked matrix has rank two and both modal coefficients are recoverable.

\subsection{Relation to Magnus and Chen Signatures}

The measurements \(a_i\) correspond to first-order information.

Indeed,
\[
a_i
=
\int_0^T
Q_i(t)
\omega_b\,dt
\]
 is the first Chen coefficient of the auxiliary integrated path $z_i(t)=\int_0^t Q_i(s)\omega_b(s)\,ds$: $a_i=S_1(z_i)$. It is also the first evaluated Magnus coefficient of that auxiliary signal. This interpretation does not identify $Q_i\omega_b$ with the relative angular velocity in a moving frame, which would in general include a frame-rate term.

These quantities contain only averaged information about the trajectory.

If the linear system is ill-conditioned or rank deficient, additional information
may be extracted from higher signature or log-signature terms, although these do not guarantee injectivity without a separate argument.

\subsection{Second-Order Log-Signature Measurements}

Define
\[
b_{i,2}^{\mathrm{sig}}
=
\frac12
\int_{0<t_1<t_2<T}
\big(Q_i(t_1)\omega_b(t_1)\big)\times
\big(Q_i(t_2)\omega_b(t_2)\big)
dt_1dt_2.
\]

Substituting the expansion
\[
\omega_b
=
\sum_{r=1}^{P}
c_r\phi_r
\]
yields
\[
b_{i,2}^{\mathrm{sig}}
=
\frac12
\sum_{r,s}
c_rc_s
K^{(2)}_{irs},
\]
where
\[
K^{(2)}_{irs}
=
\int_{0<t_1<t_2<T}
\big(Q_i(t_1)\phi_r(t_1)\big)\times
\big(Q_i(t_2)\phi_s(t_2)\big)
dt_1dt_2.
\]

Thus the second log-signature coefficient supplies quadratic constraints on the
unknown coefficients. Higher orders may add constraints, but no general recovery claim follows when the rank condition fails; for example, collinear signals have vanishing commutators at every order.

\section{Geometric Phase: Underactuated Satellite}\label{sec:geophase}

{
Geometric phase is the residual motion accumulated when a system traverses a closed path in a reduced or shape space, after the dynamic contribution has been separated out~\citep{Montgomery1991}.
In systems with symmetry, returning the reduced state to its initial value need not return the full configuration to its initial value.
The resulting displacement depends on the geometry of the reduced path, providing a mechanism for generating motion through cyclic changes of the reduced variables~\citep{KellyMurray1995}.

For an axisymmetric satellite with two transverse control torques and zero initial axial spin, a closed loop of the symmetry-axis direction can generate a net rotation about that axis, even though the instantaneous axial angular velocity remains zero.
This geometric phase can therefore be used to generate the axial rotation needed to complete an attitude maneuver after pointing alignment.
Related spacecraft steering methods have also been developed without explicitly invoking geometric phase.
Using an alternative formulation of the attitude kinematics, \citet{Tsiotras1995} derive a stabilizing feedback law for arbitrary reorientation with two transverse control torques and zero initial axial spin.
\citet{TsiotrasLuo2000} develop stabilization and tracking laws with bounded kinematic inputs, together with feasible trajectory generation based on differential flatness.

Satellite steering by geometric phase provides a natural setting for interpreting the path information retained by the proposed synopsis groups.
We use this example to connect the algebraic description of rotational history with the physical effect of cyclic pointing motion.
\par}

\subsection{Reduced Dynamics of the Axisymmetric Satellite}

\paragraph{System Dynamics and Momentum Conservation}

Consider the attitude dynamics of an underactuated satellite that is inertially symmetric about the third body-fixed axis. Its dynamics are governed by
\begin{gather}
	J\dot\omega_b+\omega_b\times J\omega_b = \tau,\label{eqn:dot_omega_b}\\
    \dot R = R\omega_b^\wedge,
\end{gather}
where \(J=\mathrm{diag}[J_1,J_1,J_3]\in\mathbb R^{3\times 3}\)  is the inertia tensor, with $J_1,J_3>0$, and $\tau=[\tau_1,\tau_2,0]^T\in\mathbb R^3$ is the underactuated control moment.

{
Let \(e_3=[0,0,1]^T\in\mathbb R^3\).
Applying the axial symmetry condition \(J_1=J_2\) and the underactuation constraint \(e_3^T\tau=0\) yields conservation of the angular momentum about the symmetry axis.
Specifically, we obtain $e_3^T J\dot\omega_b = 0$, which implies that the third component of the angular velocity is conserved.
\begin{equation}\label{eqn:axial_momentum}
    \mu := J_3e_3^T\omega_b(t)=J_3e_3^T\omega_b(0),
\end{equation}
for all \(t\geq 0\).
 Thus $e_3^T\omega_b=\mu/J_3$ is constant. In the zero-axial-momentum case $\mu=0$, the angular velocity lies in the plane orthogonal to $e_3$; for nonzero $\mu$, it lies in an affine plane offset from the origin.
The system is underactuated because the rotation about the axis of symmetry cannot be controlled directly by the applied control moment.
\par}

\paragraph{ Reduction and Reconstruction}
Write $S^2=\{v\in\mathbb R^3:\|v\|=1\}$ and identify $S^1$ with angles modulo $2\pi$. The right action $R\mapsto R\exp(\theta e_3^\wedge)$ leaves $Re_3$ fixed; all attitudes with a given $Re_3=\Gamma$ form the fiber over $\Gamma$.
This is standard Lagrange--Routh reduction and reconstruction~\citep{marsden2000reduction}: the $S^1$-symmetry about the axisymmetric spacecraft's body axis gives a principal bundle $\SO3\to\Sph^2$, $\pi(R)=Re_3$, with base variable $\Gamma=Re_3$ and the above momentum map $\mu=J_3e_3^T\omega_b$. The mechanical connection is
\begin{equation}
\mathcal A_R(\dot R)=e_3^T(R^T\dot R)^\vee=e_3^T\omega_b.
\end{equation}
Its horizontal motions have zero instantaneous spin about the symmetry axis.

Given the value of $\mu$, the reduced dynamics on $\Sph^2$ is governed by
\begin{equation}
	J_1(\ddot\Gamma+\|\dot\Gamma\|^2\Gamma) = \mu\Gamma\times\dot\Gamma - \Gamma\times\tau_s,\label{eqn:Gamma_dot}
\end{equation}
where $\tau_s = R \tau\in\mathbb R^3$ corresponds to the control moment resolved in the inertial frame, and it is orthogonal to $\Gamma$, i.e., $\Gamma^T\tau_s=0$.

\paragraph{Reconstruction}
For a prescribed reduced curve $\Gamma(t)$, its horizontal lift is defined by
\begin{equation}
\dot R_{\mathrm{hor}}=(\Gamma\times\dot\Gamma)^\wedge R_{\mathrm{hor}},
\qquad R_{\mathrm{hor}}(0)=R(0).
\end{equation}
It satisfies $R_{\mathrm{hor}}e_3=\Gamma$ and $\mathcal A_{R_{\mathrm{hor}}}(\dot R_{\mathrm{hor}})=0$. The full trajectory is reconstructed as
\begin{equation}
	R(t) = R_{\mathrm{hor}}(t)\exp(\theta(t)e_3^\wedge),\label{eqn:R}
\end{equation}
for $\theta(t)\in\Sph^1$ in the fiber phase { corresponding to the angular displacement relative to the horizontal lift.}
The conservation of the momentum yields the evolution of $\theta(t)$ as
\begin{equation}
	\theta(t) = \frac{\mu}{J_3}t,
\end{equation}
which can be substituted into \eqref{eqn:R} to obtain the complete trajectory in $\SO3$.

\subsection{Geometric Phase and Mechanical Connection}

Suppose that the reduced trajectory \(\Gamma(t)\) on \(t\in[0,T]\) is closed, i.e., $\Gamma(0)=\Gamma(T)$.
Since $R_{\mathrm{hor}}(t)e_3=\Gamma(t)$, the initial and terminal points of the horizontal lift belong to the same fiber.
Therefore, there exists an angle \(\theta_{\mathrm{geo}}\in\mathbb{R}\) such that
\begin{equation}
    R_{\mathrm{hor}}(T) = R(0)\exp(\theta_{\mathrm{geo}}e_3^\wedge).
\end{equation}
Substituting this into the reconstruction equation \eqref{eqn:R} at $t=T$, we obtain
\begin{equation}
	R(T) = R(0) \exp(\theta_{\mathrm{geo}}e_3^\wedge) \exp(\theta_{\mathrm{dyn}}e_3^\wedge), \label{eqn:RT}
\end{equation}
where $ \theta_{\mathrm{dyn}} = \theta(T) = \frac{\mu}{J_3}T$.
The quantities \(\theta_{\mathrm{dyn}}\) and \(\theta_{\mathrm{geo}}\) are referred to as the dynamic phase and geometric phase, respectively.
When \(\mu=0\), the dynamic phase vanishes and the entire rotation about the symmetry axis is generated by the geometric phase.

The geometric phase is equal to the holonomy of the mechanical connection around the closed reduced trajectory:
\begin{equation}
    \theta_{\mathrm{geo}} \equiv -\int_{\mathcal{F}} \mathcal K\pmod{2\pi},
\end{equation}
 where $\mathcal F$ is an oriented spanning surface, or more generally an oriented 2-chain on $S^2$, whose boundary is the parametrized closed curve $\Gamma(t)$ with its orientation and multiplicity, and $\mathcal K$ is the curvature two-form on $S^2$, characterized by $\pi^*\mathcal K=d\mathcal A$.
Here $d$ denotes exterior differentiation and $\pi^*$ denotes pullback. We use the outward orientation of $S^2$ and the induced boundary orientation. A real-valued phase requires a choice of representative; the holonomy itself fixes it modulo $2\pi$.

For the axisymmetric spacecraft considered here, the curvature two-form is equal to the negative of the standard area form on \(S^2\).
 Consequently, with this orientation convention, the geometric phase equals the signed spherical area enclosed by the reduced trajectory modulo $2\pi$.

\subsection{Magnus Synopsis Variables and Geometric Phase}\chg{\label{sec:magnus-geophase}}

\paragraph{Local Relation Between Geometric Phase and Synopsis Variables}

Choose $R(0)=I$, so $\Gamma(0)=e_3$, and let the reduced trajectory remain in a sufficiently small neighborhood of $e_3$. Consider a controlled family of closed loops on a fixed time interval whose coordinates and first derivatives are uniformly $O(\chg{\delta})$. Introduce local coordinates
\begin{equation*}
    \Gamma(t)=\begin{bmatrix}x(t)&y(t)&\sqrt{1-x(t)^2-y(t)^2}\end{bmatrix}^T,
\end{equation*}
where \(x,y=O(\chg{\delta})\).
The area form on \(S^2\) is
\begin{equation}
\omega_{S^2}=\frac{1}{\sqrt{1-x^2-y^2}}\,dx\wedge dy = (1+\mathcal{O}(\chg{\delta}^2))dx\wedge dy.
\end{equation}
Since $d(x\,dy-y\,dx)=2\,dx\wedge dy$, applying Stokes' theorem yields
\begin{equation}
	\theta_{\mathrm{geo}}=\frac12\oint_{\Gamma}(x\,dy-y\,dx)+\mathcal{O}(\chg{\delta}^4).
\end{equation}

On the other hand, the second-order body-fixed Magnus synopsis variable is
\begin{equation}
    b=\frac12\int_{0<t_1<t_2<T}\omega_b(t_1)\times\omega_b(t_2)\,dt_1dt_2.
\end{equation}
For the horizontal lift with this initial attitude,
$\omega_b=R_{\mathrm{hor}}^T(\Gamma\times\dot\Gamma)
=[-\dot y,\dot x,0]^T+O(\chg{\delta}^3)$.
Indeed, the horizontal lift differs from the shortest rotation carrying $e_3$ to $\Gamma$ by an axial angle of order $\chg{\delta}^2$; its effect on the transverse velocity is therefore of order $\chg{\delta}^3$. This estimate justifies the order-$\chg{\delta}^4$ remainder in the quadratic integral below.
Substituting \(\omega_{b_1}=-\dot y\) and \(\omega_{b_2}=\dot x\), the third component of $b$ is
\begin{equation*}
    b\cdot e_3=\frac12\int_0^T\int_0^{t_2}\left(\dot x(t_1)\dot y(t_2)-\dot y(t_1)\dot x(t_2)\right)\,dt_1dt_2+\mathcal O(\chg{\delta}^4).
\end{equation*}
Evaluating the inner integrals yields
\begin{equation*}
    b\cdot e_3=\frac12\int_0^T\left((x(t)-x(0))\dot y(t)-(y(t)-y(0))\dot x(t)\right)\,dt+\mathcal O(\chg{\delta}^4).
\end{equation*}
Since the curve is closed, \(\int_0^T\dot x(t)\,dt=0\) and \(\int_0^T\dot y(t)\,dt=0\), so the terms involving \(x(0)\) and \(y(0)\) vanish. Therefore
\begin{equation*}
    b\cdot e_3=\frac12\int_0^T\left(x(t)\dot y(t)-y(t)\dot x(t)\right)\,dt+\mathcal O(\chg{\delta}^4).
\end{equation*}
Finally, using \(dx=\dot x\,dt\) and \(dy=\dot y\,dt\), this becomes
\begin{equation}
    b\cdot e_3=\frac12\oint_{\Gamma}(x\,dy-y\,dx)+\mathcal O(\chg{\delta}^4).
\end{equation}

Consequently, the second-order synopsis variable captures the leading-order (flat-space) approximation of the geometric phase.
Thus, under the above orientation and Magnus-sign conventions,
\begin{equation}
    \theta_{\mathrm{geo}} = b\cdot e_3+\mathcal O(\chg{\delta}^4).
\end{equation}
Therefore, the second-order Magnus synopsis variable captures the leading-order, local approximation of the geometric phase associated with the mechanical connection on \(\SO3\to \Sph^2\).

\paragraph{Geometric Phase and Higher-Order Magnus Synopsis Variables}

When the Magnus series converges, the higher-order synopsis variables capture the geometric phase in the limit.
The Magnus expansion provides
\[
    R(0)^T R(T) = \exp\!\left(\Xi_b(T)\right) = \exp\!\left((x_b+b_b+c_b+\cdots)^\wedge\right),
\]
where $x_b=\Xi_{1,b}^\vee$, $b_b=\Xi_{2,b}^\vee$, $c_b=\Xi_{3,b}^\vee$, $d_b=\Xi_{4,b}^\vee$, etc.

If the reduced trajectory is closed and \(\mu=0\), then the dynamic phase vanishes and
\[
    R(0)^T R(T)=\exp(\theta_{\mathrm{geo}}e_3^\wedge).
\]
Hence
\[
    \exp\!\left((x_b+b_b+c_b+\cdots)^\wedge\right) = \exp(\theta_{\mathrm{geo}}e_3^\wedge).
\]

For this identification assume that the convergent Magnus logarithm is in the principal rotation-vector domain $\|\Xi_b^\vee(T)\|<\pi$; for example, the sufficient condition $\int_0^T\|\omega_b(t)\|dt<\pi$ ensures this local regime. Choosing the corresponding representative of the phase then gives
\[
    \theta_{\mathrm{geo}} = e_3^T(x_b+b_b+c_b+\cdots).
\]
The branch restriction matters: equality of two matrix exponentials alone does not justify equality of the axial projections of arbitrary logarithms, even modulo $2\pi$.

However, the odd-order Magnus terms have zero component along $e_3$. Indeed, a homogeneous order-$n$ nested bracket contains $n-1$ cross products, and brackets alternate between the $e_1$--$e_2$ plane and the $e_3$ axis.
Consequently, the geometric phase is determined only by the \(e_3\)-components of the even-order Magnus terms; equivalently,
\begin{equation}
    \theta_{\mathrm{geo}} \equiv e_3^T(b_b+d_b+\cdots) \pmod{2\pi}.
\end{equation}

{
For a closed reduced loop with zero axial momentum, the retained endpoint attitude determines the geometric phase modulo $2\pi$.
The purpose of the Magnus coordinates here is to describe how contributions at different orders combine to produce this phase.
This interpretation connects the synopsis coordinates to a physical consequence of rotational history, while the endpoint provides an exact reference for assessing finite-order phase approximations.
\par}

\subsection{Satellite Steering by Geometric Phase}

\begin{figure}
	\begin{center}
		\footnotesize
		\tdplotsetmaincoords{65}{115}

		\begin{tikzpicture}[
			tdplot_main_coords,
			scale=4,
			line cap=round,
			line join=round,
			>=Stealth
			]
			\def\R{1}
			\def\psi{55}   %
			\def\rho{35}   %
			\def\sig{1}    %

			\coordinate (O)  at (0,0,0);
			\coordinate (Gd) at (0,0,1);
			\coordinate (G0) at (0,{sin(\psi)},{cos(\psi)});
			\coordinate (uone) at (1,0,0);
			\coordinate (utwo) at ({\sig*sin(\rho)},0,{cos(\rho)});

			\tikzset{
				spheregrid/.style={gray!35, thin},
				phaseone/.style={
					very thick,
					blue!70!black,
					postaction={
						decorate,
						decoration={
							markings,
							mark=at position 0.55 with {\arrow{Stealth}}
						}
					}
				},
				phasetwo/.style={
					very thick,
					blue!70!black,
					postaction={
						decorate,
						decoration={
							markings,
							mark=at position 0.25 with {\arrow{Stealth}},
							mark=at position 0.70 with {\arrow{Stealth}}
						}
					}
				},
				vec/.style={-{Stealth[length=2mm]}, thick},
				point/.style={circle, fill=black, inner sep=0.8pt}
			}

		\begin{scope}[tdplot_screen_coords]
				\shade[ball color=blue!8, opacity=0.20] (0,0) circle (\R);
			\draw[gray!60, thin] (0,0) circle (\R);
		\end{scope}

		\foreach \z in {-0.6,-0.3,0,0.3,0.6}{
			\pgfmathsetmacro{\rr}{sqrt(1-\z*\z)}
			\draw[spheregrid]
				plot[domain=0:360, samples=90, variable=\t]
				({\rr*cos(\t)},{\rr*sin(\t)},{\z});
		}

		\foreach \ph in {0,30,60,90,120,150}{
			\draw[spheregrid]
				plot[domain=-90:90, samples=80, variable=\t]
				({cos(\t)*cos(\ph)},{cos(\t)*sin(\ph)},{sin(\t)});
		}

		\draw[phaseone]
			plot[domain=0:\psi, samples=80, variable=\t]
			({0},{sin(\psi-\t)},{cos(\psi-\t)});

		\node[blue!70!black, anchor=south west] at (0,{sin(0.55*\psi)},{cos(0.55*\psi)})
			{\small Phase 1};

		\draw[phasetwo]
			plot[domain=0:360, samples=160, variable=\a]
			(
			{\sig*sin(\rho)*cos(\rho)*(1-cos(\a))},
			{-sin(\rho)*sin(\a)},
			{cos(\rho)*cos(\rho)+sin(\rho)*sin(\rho)*cos(\a)}
			);

		\node[anchor=north east] at
			({\sig*sin(\rho)*cos(\rho)*(1-cos(155))},
			{-sin(\rho)*sin(155)},
			{cos(\rho)*cos(\rho)+sin(\rho)*sin(\rho)*cos(155)})
			{\small Phase 2};

		\draw[vec] (O) -- (uone)
			node[anchor=north east] {$u_1$};

		\draw[vec] (O) -- (utwo)
		node[anchor=south west] {$u_2$};

		\draw[vec] (O) -- (G0)
		node[anchor=south west] {$\Gamma_0$};

		\draw[vec] (O) -- (Gd)
		node[anchor=south] {$\Gamma_d$};

		\node[point] at (G0) {};
		\node[point] at (Gd) {};
		\node[point] at (utwo) {};

		\node[red!75!black, anchor=west] at
		({0.55*\sig*sin(\rho)},0,{0.55*cos(\rho)})
		{};

	\end{tikzpicture}

\end{center}
\caption{Satellite steering using geometric phase: visualization of the trajectory of the axis of symmetry $\Gamma=Re_3$ on the unit-sphere}
\label{fig:sphere_trajectory}
\end{figure}

The geometric phase presented above can be used for motion planning of an underactuated satellite.
Let \(R_0,R_d\in\SO3\) denote the initial and desired attitudes of the satellite, respectively.
We construct a rest-to-rest maneuver with zero axial momentum.
The maneuver from \(R_0\) to \(R_d\) is decomposed into two steps.
First, the symmetry axis is rotated from \(\Gamma_0=R_0e_3\) to \(\Gamma_d=R_de_3\in\Sph^2\).
Second, the remaining rotation about the symmetry axis is generated by a closed loop of \(\Gamma(t)\) on \(\Sph^2\).

\paragraph{Pointing Alignment}

Let \([0,T_1]\) denote the time interval for the first step.
Let  \(\psi_1\in[0,\pi]\) and \(u_1\in\Sph^2\) be the rotation angle and rotation axis that carry \(\Gamma_0\) to \(\Gamma_d\).
When \(\Gamma_0\times\Gamma_d\neq 0\), they are given by
\begin{equation}
    \psi_1 = \cos^{-1}(\Gamma_0^T\Gamma_d),
    \qquad
    u_1 = \frac{\Gamma_0\times\Gamma_d}{\|\Gamma_0\times\Gamma_d\|}.
\end{equation}
If $\Gamma_0=\Gamma_d$, take $\psi_1=0$ and choose any unit $u_1\perp\Gamma_d$; then $R_1=R_0$, and this choice of $u_1$ is available if phase two is needed. If $\Gamma_0=-\Gamma_d$, take $\psi_1=\pi$ and choose any unit $u_1\perp\Gamma_0$, which is also perpendicular to $\Gamma_d$.
Choose a smooth function \(\psi:[0,T_1]\to[0,\psi_1]\) satisfying \(\psi(0)=0\) and \(\psi(T_1)=\psi_1\).
Require $\dot\psi(0)=\dot\psi(T_1)=0$ so this phase starts and ends at rest. To obtain continuous zero endpoint torques, also impose $\ddot\psi(0)=\ddot\psi(T_1)=0$.
The corresponding attitude trajectory may be written as
\begin{equation}
    R(t)=\exp(\psi(t)u_1^\wedge)R_0
        = R_0\exp(\psi(t)(R_0^Tu_1)^\wedge),
\end{equation}
for $t\in[0,T_1]$.
Its body-fixed angular velocity is \(\omega_b(t)=\dot\psi(t)R_0^Tu_1\).
Substitution into Euler's equation \eqref{eqn:dot_omega_b} gives the required torque,
{
\begin{equation}
    \tau(t) = J_1\ddot\psi(t)R_0^Tu_1.
\end{equation}
}%
Since \(R_0^Tu_1\) is orthogonal to \(e_3\), this torque has no component along the unactuated axis.
By construction, \(R(T_1)e_3=\Gamma_d\), so the first step aligns the symmetry axis with that of the desired attitude.

\paragraph{Axial Rotation via Geometric Phase}

After completion of the first step, the remaining attitude error is a rotation about the axis of symmetry.
Let \(R_1=R(T_1)\in\SO3\).
There exists \(\phi_2\in\mathbb{R}/2\pi\mathbb{Z}\) such that \(R_1^TR_d=\exp(\phi_2 e_3^\wedge)\).
 Choose a real representative $\phi_2\in[-\pi,\pi]$ by
\begin{equation}
    \phi_2 = \operatorname{atan2} \big( e_2^TR_1^TR_d e_1,\, e_1^TR_1^TR_d e_1 \big).
\end{equation}
The goal of the second phase is to generate this remaining axial rotation as a geometric phase.
If $\phi_2=0$, omit the second phase. Otherwise $\kappa$ below is either $+1$ or $-1$.
Let \(\kappa=\operatorname{sgn}(\phi_2)\), and choose the angular radius  \(\rho\in[0,\pi/3]\) so that \(2\pi(1-\cos\rho)=|\phi_2|\).
Define the center of the circular loop by
\begin{equation}
    u_2 = \cos\rho\,\Gamma_d + \kappa\sin\rho\,u_1.
\end{equation}
The corresponding circle on \(\Sph^2\) is
\begin{equation}
    \mathcal C_2 = \left\{ \Gamma\in\Sph^2 \;:\; u_2^T\Gamma=\cos\rho \right\}.
\end{equation}
Let \(s=t-T_1\in[0,T_2]\), and choose a smooth function \(\alpha:[0,T_2]\to[0,2\pi]\) satisfying \(\alpha(0)=0\) and \(\alpha(T_2)=2\pi\).
Require $\dot\alpha(0)=\dot\alpha(T_2)=0$, ensuring zero angular velocity at the splice and the end of the maneuver. If continuous zero endpoint torques are required, impose $\ddot\alpha(0)=\ddot\alpha(T_2)=0$ as well.
The desired reduced trajectory during the second phase is
\begin{equation}
    \Gamma(t) = \exp(\kappa\alpha(s)u_2^\wedge)\Gamma_d.
\end{equation}
This curve starts and ends at \(\Gamma_d\), and it encloses signed spherical area \(\phi_2\).
The attitude trajectory realizing this reduced motion is obtained by the horizontal lift
\begin{equation}
    R(t) = \exp(\kappa\alpha(s)u_2^\wedge) R_1 \exp(-\kappa\alpha(s)\cos\rho\,e_3^\wedge).
\end{equation}
 Using $\alpha(T_2)=2\pi$ and $\exp(2\pi\kappa u_2^\wedge)=I$, the terminal attitude is \[\begin{aligned}R(T_1+T_2)&=R_1\exp(-2\pi\kappa\cos\rho\,e_3^\wedge)\\&=R_1\exp(\phi_2e_3^\wedge)=R_d,\end{aligned}\] since $\phi_2=2\pi\kappa(1-\cos\rho)$.
It remains to verify that this trajectory can be generated by an admissible torque.
Differentiating the horizontal lift gives the body-fixed angular velocity \(\omega_b(t)=\kappa\dot\alpha(s)\eta(t)\), where
\begin{equation}
	\eta(t) = \exp\!\big(\kappa\alpha(s)\cos\rho\,e_3^\wedge\big) \big( R_1^Tu_2-\cos\rho\,e_3 \big).
\end{equation}
Since \(e_3^T R_1^Tu_2=\Gamma_d^Tu_2=\cos\rho\), we have \(e_3^T\eta(t)=0\), and hence \(e_3^T\omega_b(t)=0\).
 Thus the horizontal lift remains on the zero-axial-momentum level.
Finally, substituting \(\omega_b(t)=\kappa\dot\alpha(s)\eta(t)\) into Euler's equation gives the required torque for the second phase,
\begin{equation}
	\tau(t) = \kappa J_1\ddot\alpha(s)\eta(t) + J_1\dot\alpha(s)^2\cos\rho\, e_3\times\eta(t).
\end{equation}
This torque also satisfies \(e_3^T\tau(t)=0\), so the constructed maneuver is attainable by the underactuated satellite.

\subsection{Numerical Simulation}

{
We illustrate the two-stage satellite steering maneuver and assess how accurately finite-order Magnus synopsis variables approximate its geometric phase.
The spacecraft has inertia tensor $J=\operatorname{diag}(2.0,2.0,1.5)\,\mathrm{kg\,m^2}$ and starts at rest, so its conserved axial momentum is $\mu=0$.
The initial and desired attitudes are specified by the angle triples $(20^\circ,-35^\circ,50^\circ)$ and $(-45^\circ,60^\circ,-30^\circ)$, respectively.
Each triple is ordered as (roll, pitch, yaw), with the 3-2-1 convention $R=R_z(\mathrm{yaw})R_y(\mathrm{pitch})R_x(\mathrm{roll})$.

Both phases use the quintic profile $h(r)=10r^3-15r^4+6r^5$, whose first and second derivatives vanish at $r=0$ and $r=1$:
\begin{enumerate}
    \item \textbf{Phase 1} ($t\in[0,T_1]$, $T_1=5.0\,\mathrm{s}$): The symmetry axis moves from $\Gamma_0=R_0e_3$ to $\Gamma_d=R_de_3$ along the shortest geodesic, with $\psi(t)=\psi_1h(t/T_1)$ and $\psi_1=1.5670\,\mathrm{rad}$.
    \item \textbf{Phase 2} ($t\in[T_1,T_1+T_2]$, $T_2=10.0\,\mathrm{s}$): The symmetry axis traverses the circular loop $\mathcal C_2$ of angular radius $\rho=0.7127\,\mathrm{rad}$, starting and ending at $\Gamma_d$.
    With $s=t-T_1$, the loop is parameterized by $\alpha(s)=2\pi h(s/T_2)$ and traversed in the direction $\kappa=-1$.
    Its signed spherical area equals the residual axial attitude error, $\phi_2=-1.5293\,\mathrm{rad}$.
\end{enumerate}
These profiles ensure zero angular velocity and acceleration at the endpoints of each phase.
The attitude-error angle in Figure~\ref{fig:states_and_torques} is the geodesic error $\phi_{\mathrm{err}}(t)=\cos^{-1}((\operatorname{tr}(R_d^TR(t))-1)/2)\in[0,\pi]$, displayed in degrees.

As shown in \Cref{fig:states_and_torques}, Phase~1 eliminates the pointing mismatch, leaving an axial attitude error of $87.62^\circ$, corresponding to $|\phi_2|$.
The geometric phase generated in Phase~2 removes this residual error, bringing the spacecraft to the desired attitude at $T_1+T_2=15\,\mathrm{s}$.
Throughout the maneuver, $e_3^T\omega_b=0$ and $\tau_3=0$, consistent with zero axial momentum and actuation by transverse torques alone.

Figure~\ref{fig:convergence_rate} compares the terminal phase errors for Magnus truncations of the Phase~2 loop, with all retained coefficients initialized to zero at $t=T_1$.
The reference phase is the principal representative $\theta_{\mathrm{geo}}=\phi_2=-1.5293\,\mathrm{rad}$.
Only even orders are shown because the odd-order terms have zero axial component, as established in \chg{Section~\ref{sec:magnus-geophase}}.
The second-order approximation $b_b\cdot e_3$ has an absolute error of $0.6191\,\mathrm{rad}$ ($35.47^\circ$).
The error decreases at each plotted order $k=2,4,6,8,10$, reaching $0.0034\,\mathrm{rad}$ ($0.19^\circ$) at order ten.
 A least-squares fit of the natural logarithm $\ln E_k$ of the error against $k$ gives an empirical slope of approximately $-0.645$ per order.

For this loop,
\[
\int_{T_1}^{T_1+T_2}\|\omega_b(t)\|\,dt
=2\pi\sin\rho\approx4.108>\pi,
\]
so the sufficient Magnus convergence condition stated in \chg{Section~\ref{sec:magnus-geophase}} is not satisfied.
The observed error decay is therefore an empirical result over the computed orders.
Although $|\phi_2|<\pi$ identifies the principal reference phase, it does not by itself establish convergence of the Magnus series to the principal logarithm.

This numerical example shows that higher-order Magnus synopsis variables provide progressively more accurate approximations of the loop's geometric phase over the computed orders.
Since the stored endpoint already determines this phase modulo $2\pi$, the example illustrates the physical interpretation of the Magnus coordinates as an order-by-order description of the accumulated phase.
\par}

\begin{figure}[htbp]
    \centering
    \safeincludegraphics[width=0.85\textwidth]{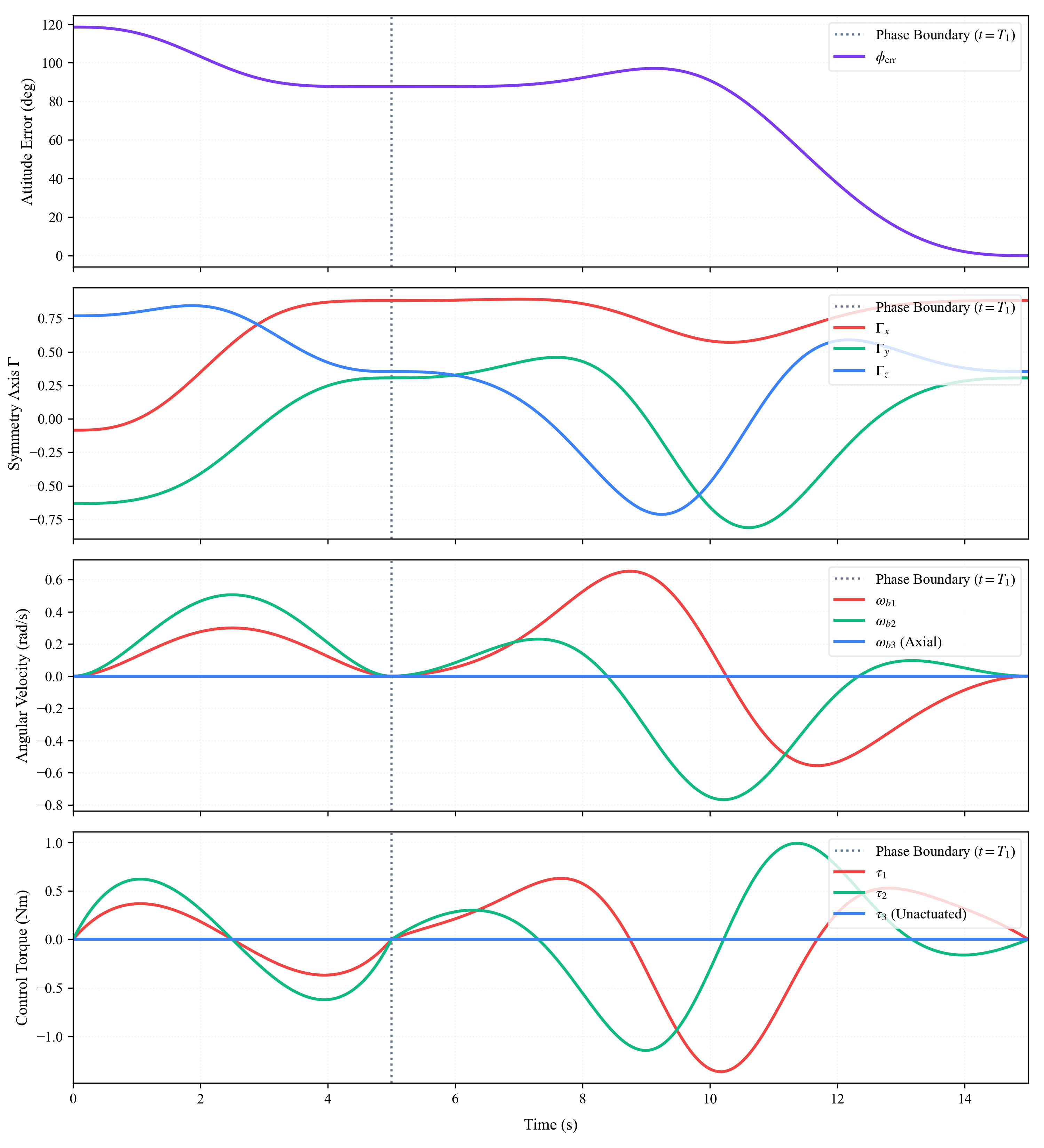}
    \caption{Time history of the underactuated satellite maneuver: attitude error $\phi_{\mathrm{err}}$, symmetry axis components $\Gamma$, body angular velocity $\omega_b$, and control torque $\tau$ across Phase 1 and Phase 2 ($T_1=5\,\text{s}$, $T_2=10\,\text{s}$).}
    \label{fig:states_and_torques}
\end{figure}

\begin{figure}[htbp]
    \centering
    \begin{tikzpicture}
        \node[anchor=south west,inner sep=0] (phaseplot) at (0,0)
            {\safeincludegraphics[width=0.65\textwidth]{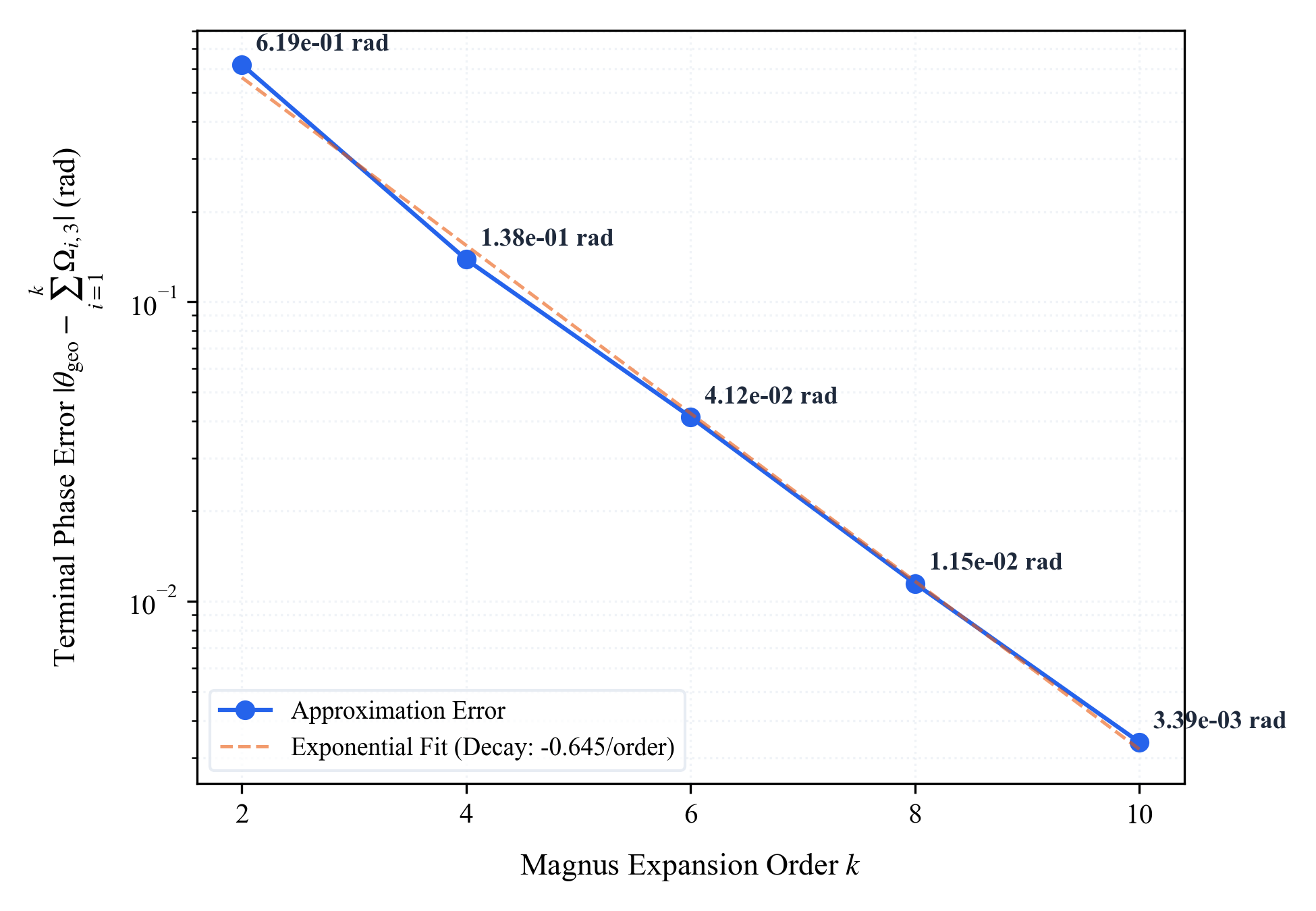}};
        \begin{scope}[x={(phaseplot.south east)},y={(phaseplot.north west)}]
            \fill[white] (0.012,0.14) rectangle (0.087,0.91);
            \node[rotate=90,inner sep=0] at (0.049,0.553)
                {\fontsize{7}{8}\selectfont Phase error (rad)};
        \end{scope}
    \end{tikzpicture}
    \caption{Terminal phase error $E_k=\left|\theta_{\mathrm{geo}}-\sum_{i=1}^{k}e_3^T\Xi_{i,b}^{\vee}\right|$ (rad) for the Phase~2 loop, as a function of the Magnus expansion order $k$.}
    \label{fig:convergence_rate}
\end{figure}

\section{Conclusion}

The synopsis framework presented here, which consists of retained endpoints coupled with truncated Chen or Magnus signatures, gives composable finite-dimensional descriptions of rotational motion in body, spatial, and combined coordinates. The construction preserves the exact terminal attitude while retaining selected integral information about how it was reached. The all-orders results establish the group laws, inverses, and finite matrix realizations, and show precisely why endpoint retention adds information beyond any fixed finite Chen truncation.

The full Chen hierarchy and the evaluated Magnus hierarchy offer different storage and information choices: they agree at order two but diverge from order three onward. Conditional finite-modal reconstruction and the geometric-phase example connect these descriptors to interpretable features of rotational motion. The classical signature and Lie-group results provide the algebraic foundations; the explicit rotational constructions provide states that can be composed and implemented. Assessing their benefit in estimation and filtering is a concrete direction for subsequent work.
{The geometric-phase example illustrates interpretability rather than necessity, since for a closed loop with $\mu=0$  the stored endpoint determines the phase modulo $2\pi$.  Proposition~\ref{prop:endpoint-adds} resolves realizability for the body, spatial, and hybrid Chen synopsis groups, and the quotient maps give the same result for the evaluated Magnus groups. Propagation of uncertainty through the composition laws remains an open question.}

\section*{Acknowledgements}
This work was aided by the use of  ChatGPT 5.2 (including the generation of Figure 1), and  ChatGPT 6 and Claude Sonnet 5 for editing and assistance with proofs, and was supported by internal funds from the University of Delaware and Mohamed bin Zayed University of Artificial Intelligence.

\end{document}